\date{20 May, 2023}\usepackage[T1]{fontenc}
\def\CC{\ensuremath{\mathbb{C}}}
\def\RR{\ensuremath{\mathbb{R}}}
\def\cifty{\ensuremath{C^{\infty}}}
\def\PP{\ensuremath{\mathbb{P}}}
\def\cp{\ensuremath{\CC\PP^{1}}}
\def\HH{\ensuremath{\mathbb{H}}}
\newcommand{\forms}[2]{\Omega^{#1}(#2)}
\newcommand{\secs}[1]{\forms{0}{#1}}
\newcommand{\conj}[1]{\overline{#1}}
\newcommand{\holosecs}[1]{\mathrm{H}^0{(#1)}}
\newcommand{\abs}[1]{\left|#1\right|}
\def \DD{\partial}
\def \DDz{\DD_{z}}
\def \DDDz{\DD_{\conj{z}}}
\def \DDD{\overline{\partial}}
\DeclareMathOperator{\End}{\ensuremath{End}}
\DeclareMathOperator{\rank}{\ensuremath{rank}}
\DeclareMathOperator{\psl}{\ensuremath{PSL}}
\DeclareMathOperator{\ssl}{\ensuremath{SL}}
\DeclareMathOperator{\Proj}{\ensuremath{Proj}}
\DeclareMathOperator{\hol}{\ensuremath{hol}}
\DeclareMathOperator{\ord}{\ensuremath{ord}}
\DeclareMathOperator{\Hom}{\ensuremath{Hom}}
\DeclareMathOperator{\Div}{\ensuremath{div}}
\DeclareMathOperator{\Gr}{\ensuremath{Gr}}
\def\sl{\ensuremath{\ssl(2,\CC)}}
\def\slr{\ensuremath{\ssl(2,\RR)}}
\def\beltrami{\ensuremath{\mathcal{B}}}
\theoremstyle{plain}
\newtheorem{theorem}{Theorem}
\newtheorem*{theorem*}{Theorem}
\newtheorem{lemma}[theorem]{Lemma}
\newtheorem{proposition}[theorem]{Proposition}
\newtheorem{corollary}[theorem]{Corollary}
\theoremstyle{remark}
\newtheorem{remark}[theorem]{Remark}
\newtheorem{example}[theorem]{Example}
\theoremstyle{definition}
\newtheorem{definition}[theorem]{Definition}
\numberwithin{theorem}{section}
\numberwithin{equation}{section}
\DeclareMathAlphabet{\mathpzc}{OT1}{pzc}{m}{it}
\title{The conformal limit and projective structures}
\author[P. M. Silva]{Pedro M.\ Silva}
\author[P.~B.\ Gothen]{Peter B.\ Gothen}
\address{Centro de Matemática da Universidade do Porto 
	\newline\indent Departamento de Matemática, Faculdade de Ci\^encias da Universidade do Porto 
	\newline\indent Rua do Campo Alegre s/n, 4169-007 Porto, Portugal}
\email{pmsilva@fc.up.pt  and pbgothen@fc.up.pt}
\thanks{First author supported by grant PD/BD/150349/2019 and both authors partially supported by CMUP (UIDB/00144/2020 and UIDP/00144/2020) and the project EXPL/MAT-PUR/1162/2021 all funded by FCT (Portugal) with national funds.  }
\begin{document}

	\begin{abstract}
		The non-abelian Hodge correspondence maps a polystable $\slr$-Higgs bundle on a compact Riemann surface $X$ of genus $g\geq2$ to a connection which, in some cases, is the holonomy of a branched hyperbolic structure. On the other hand, Gaiotto's conformal limit maps the same bundle to a partial oper, i.e., to a connection whose holonomy is that of a branched complex projective structure compatible with $X$.
		In this article, we show how these are both instances of the same phenomenon: the family of connections appearing in the conformal limit can be understood as a family of complex projective structures, deforming the hyperbolic ones into the ones compatible with $X$.
		We also show that, when the Higgs bundle has zero Toledo invariant, this deformation is optimal, inducing a geodesic on Teichmüller's metric space.
	\end{abstract}
	
	\maketitle
	\tableofcontents
	\onehalfspacing
	\section{Introduction}
	
	The geometric study of representations $\rho:\pi_1(M)\to G$ of the fundamental group of a surface $M$ into a Lie group $G$ is an active topic of research. It has a long and deep-rooted history starting perhaps with the uniformization theorem of a compact Riemann surface by a Fuchsian group and it is intimately connected with Teichmüller's work on the space $\mathscr{T}(M)$ that now bears his name. The particular approach to the subject we will follow relies on using the non-abelian Hodge correspondence to construct geometric structures, which are naturally associated to $\rho,$ in order to better understand the nature of the representation. 
	
	The use of Higgs bundles to understand geometric structures on surfaces originates with Hitchin's parametrization of Teichmüller spaces in his seminal paper \cite{hitchin:1987vg}. More recently several authors have used Higgs bundles to study other geometric structures; see, e.g., \cite{baraglia:2010,biswas:2021aa,collier:2020,collier:2023,labourie:2007} and, for the case of complex projective structures \cite{alessandrini:2019aa,alessandrini-etal:2021}.
	
	Let $X$ be a closed Riemann surface of genus $g\geq 2$ with underlying smooth surface $M$. We use Higgs bundles to construct a family of complex projective structures which interpolates between the hyperbolic structure corresponding to a quadratic differential $q$ on $X$ in Hitchin's parametrisation of $\mathscr{T}(M)$, and the complex projective structure corresponding to the $\hbar$-conformal limit of the Higgs bundle given by $q$ \cite{gaiotto:2014td,dumitrescu:2021uz,collier:2018aa}. This family is parametrized by $\hbar\in\CC^*$ and $R>0$ such that $\abs{\hbar R}\leq 1$. In fact, the construction works more generally for the branched hyperbolic structures studied in \cite{biswas:2021aa} and produces branched projective structures.
	
	We now explain our construction in more detail. Let $G$ be a reductive Lie group (real or complex). The non-abelian Hodge correspondence establishes a homeomorphism between the moduli space $\mathcal{M}^{G}_{\mathrm{Dol}}$ of polystable $G$-Higgs bundles on $X$ and the moduli space $\mathcal{M}^{G}_{\mathrm{dR}}$ of flat reductive connections; the latter can in turn be identified with the moduli space $\mathcal{M}^{G}_{\mathrm{B}}$ of semisimple representations of $\pi_1(M)$ in $G$ via the holonomy representation of a flat connection.
	
	In this paper, we focus on the rank two case. An $\sl$-Higgs bundle is a pair $(E,\Phi)$, where $E$ is a rank 2 holomorphic vector bundle on $X$ with trivial determinant bundle and $\Phi\in \holosecs{\End_0(E) \otimes K}$ is a traceless endomorphism valued holomorphic 1-form (we have written $K$ for the holomorphic cotangent bundle, which coincides with the canonical line bundle of $X$). An $\slr$-Higgs bundle can be viewed as an $\sl$-Higgs bundle of the form
	\begin{displaymath}
		\left(E=L \oplus L^{-1},\Phi = 
		\begin{pmatrix}
			0 & \alpha \\
			\beta & 0
		\end{pmatrix}
		\right),
	\end{displaymath}
	where $L$ is a holomorphic line bundle; note that $\alpha\in H^0(L^2K)$ and $\beta\in H^0(L^{-2}K)$. The topological invariant $d=\deg(L)$ gives a partition of $\mathcal{M}^{\slr}$ into subspaces $\mathcal{M}^{\slr}_d$. The invariant $\deg(L)$ is bounded by the Milnor--Wood inequality $\abs{\deg(L)}\leq g-1$ and, by symmetry, $\mathcal{M}^{\slr}_d \cong \mathcal{M}^{\slr}_{-d}$, so we may assume $d\geq 0$. The subspaces $\mathcal{M}^{\slr}_d$ are connected components, except $\mathcal{M}^{\slr}_{g-1}$. The latter has $2^{2g}$ components which are identified when passing to the quotient $\mathrm{PSL}(2,\RR)$; we refer to any such maximal component as a Hitchin component. Since for $d>0$ the polystability condition is equivalent to $\beta\neq 0$, one has $L^2\cong K$ for $d=g-1$, the square roots of $K$ thus accounting for the $2^{2g}$ Hitchin components. Moreover, each component is parameterized by quadratic differentials $q=\alpha\in\holosecs{K^2}$. Hitchin \cite{hitchin:1987vg}
	proved that a Hitchin component parameterizes all hyperbolic structures on $M$ and that under the non-abelian Hodge correspondence a hyperbolic structure is sent to its holonomy. Biswas et al.~\cite{biswas:2021aa} generalized this to show that any $\slr$-Higgs bundle with $\Div(\alpha)\geq\Div(\beta)$ gives rise to a branched hyperbolic structure with branching divisor $\Div(\beta)$.
	
	On the other hand, Gaiotto's conformal limit \cite{gaiotto:2014td}, is related to complex projective structures. For each Higgs bundle it introduces a family $\nabla_{\hbar,R}$ of flat connections parameterised by $\hbar\in\CC^*$ and $R>0$ such that $\nabla_{1,1}$ is the flat connection corresponding to the Higgs bundle under the non-abelian Hodge correspondence. For Higgs bundles in an $\slr$-Hitchin component, Gaiotto conjectured that the \emph{conformal limit} $\nabla_{\hbar,0} = \lim_{R\to 0}\nabla_{\hbar,R}$ exists and is an oper, i.e., defines a complex projective structure compatible with the Riemann surface structure of $X$. The conjecture has been proven (for Hitchin components for any split real $G$) by Dumitrescu et al.~\cite{dumitrescu:2021uz} and (for any Higgs bundle with stable $\CC^*$-limit at zero) by Collier--Wentworth \cite{collier:2018aa}. In particular, for $\beta\neq 0$ the conformal limit for an $\sl$-Higgs bundle is a partial oper which defines a branched projective structure.
	
        In the present specific case of $G=\slr$, we take advantage of the symmetry of Hitchin's equations to identify the conformal limit explicitly (see Theorem~\ref{conflimit} below) without using the Implicit Function Theorem in Banach Spaces, in contrast to the references just cited.

	Our main results on projective structures can now be summarised as follows.
	
	\begin{theorem*}
		Let 
		$(E=L\oplus L^{-1}, \Phi=\begin{psmallmatrix}
			0 & \alpha \\
			\beta & 0
		\end{psmallmatrix})$ 
		be an $\slr$-Higgs-bundle with $0\leq\deg(L)\leq g-1$ and $\beta\neq 0$. 
		Assume that $\abs{\hbar R}\leq 1$ (or $\abs{\hbar R}< 1$ when $\deg(L)=0$) and $\Div(\alpha)\geq\Div(\beta)$. Then the following results hold.
		\begin{enumerate}
			\item There is a Riemann surface structure  $X_\mu\in\mathscr{T}(M)$ associated to a Beltrami differential $\mu=\mu(\hbar,R)$ on $X$ and a branched projective structure $\mathcal{P}(\hbar,R)\in\mathcal{B}(M)$ with branching divisor $\Div(\beta)$, compatible with $X_\mu$. 
			\item The family $\mathcal{P}(\hbar,R)$ depends continuously on $(\hbar,R)$ and when $\deg(L)>0$ it interpolates between a branched hyperbolic structure $\mathcal{P}(\hbar,1)$ and the branched projective structure given by the partial oper $(\nabla_{\hbar,0},L)$.
			\item For $\deg(L)=0$ the curve $R\mapsto X_{\mu(\hbar,R)}$ in $\mathscr{T}(M)$ is a geodesic ray in the Teichmüller metric.
		\end{enumerate}
		Both $\mu(\hbar,R)$ and $\mathcal{P}(\hbar,R)$ are defined in terms of the Higgs bundle data (see Theorem \ref{main theorem}) and, in particular, the holonomy of $\mathcal{P}(\hbar,R)$ is that of $\nabla_{\hbar, R}$.
	\end{theorem*}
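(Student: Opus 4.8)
The plan is to realise each $\nabla_{\hbar,R}$ as a (partial) oper with respect to a complex structure $X_\mu\in\mathscr{T}(M)$ which, together with $\mathcal{P}(\hbar,R)$, is read off from the Higgs data (Theorem~\ref{main theorem}). I exploit the diagonal form of $(E,\Phi)$: the harmonic metric is $h=\mathrm{diag}(h_L,h_L^{-1})$, Hitchin's equation reduces to one scalar equation for $h_L=h_{L,R}$, and in the holomorphic frame adapted to $E=L\oplus L^{-1}$ the connection form of $\nabla_{\hbar,R}$ is $A=\begin{psmallmatrix} a & b\\ c & -a\end{psmallmatrix}$, with $a$ the Chern connection of $h_L$ and lower-left entry
\begin{displaymath}
c=p(\hbar,R)\,\beta\,dz+q(\hbar,R)\,h_L^{2}\,\conj{\alpha}\,d\conj{z}
\end{displaymath}
the term governing both the new complex structure and the branching (here $p,q$ are the coefficients of $\Phi$ and $\Phi^{*_h}$ in $\nabla_{\hbar,R}$, with $p(1,1)=q(1,1)=1$).

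I define $X_\mu$ by demanding that $L$ be a holomorphic subbundle for the $(0,1)$-operator of $\nabla_{\hbar,R}$ in the $\mu$-complex structure. This is the vanishing of the $(2,1)$-entry of the $(0,1)_\mu$-part of $A$, namely $-\mu\,p\beta+q\,h_L^2\conj\alpha=0$, giving
\begin{displaymath}
\mu(\hbar,R)=\lambda(\hbar,R)\,h_L^{2}\,\frac{\conj{\alpha}}{\beta},\qquad \lambda=\frac{q}{p},
\end{displaymath}
where $\abs{\lambda}$ is a positive power of $\abs{\hbar R}$. Two points must then be checked. First, $\mu$ is a genuine Beltrami differential on all of $M$: its only possible poles are at the zeros of $\beta$, and $\Div(\alpha)\ge\Div(\beta)$ keeps $\conj{\alpha}/\beta$, hence $\mu$, bounded there. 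Second, and this is the analytic heart, one needs $\|\mu\|_\infty<1$ so that $X_\mu\in\mathscr{T}(M)$. Writing $h_L^2=e^{2u}$, the scalar Hitchin equation has the form $\Delta u=e^{2u}\abs{\alpha}^2-e^{-2u}\abs{\beta}^2$ (up to a positive factor), and applying the maximum principle to $v=\log\!\big(h_L^{2}\abs{\alpha}/\abs{\beta}\big)$—an Ahlfors--Schwarz estimate—gives $h_L^{2}\abs{\alpha}\le\abs{\beta}$. Hence $\abs{\mu}\le\abs{\lambda}\le 1$, with strict inequality once $\abs{\hbar R}<1$.

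With $\mu$ so defined, $\nabla^{0,1}_\mu$ makes $E$ holomorphic, $L$ a holomorphic subbundle, and its second fundamental form is $\beta$; thus $(\nabla_{\hbar,R},L)$ is a partial oper on $X_\mu$. Its developing map $f$—the ratio of the $L$-components of two independent flat sections, i.e.\ of two solutions of the associated holomorphic second-order equation on $X_\mu$—is $X_\mu$-holomorphic, because flat sections are $\nabla^{0,1}_\mu$-holomorphic, and it has $\proj$-monodromy equal to the projectivised holonomy of $\nabla_{\hbar,R}$. The map $f$ fails to be an immersion exactly where its second fundamental form $\beta$ vanishes, so $\mathcal{P}(\hbar,R)$ is a branched projective structure with branching divisor $\Div(\beta)$; this proves (1) and the final sentence. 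For (2): continuity in $(\hbar,R)$ follows from the explicit formulas and the continuity of $h_{L,R}$; at $R=1$ the connection is the non-abelian Hodge connection, and one matches $(X_\mu,\mathcal{P}(\hbar,1))$ with the branched hyperbolic structure of \cite{biswas:2021aa} by checking that its uniformising developing map $\HH\hookrightarrow\cp$ has the same Beltrami coefficient $\mu$ and holonomy, while as $R\to0$ one has $\mu\to0$, so $X_\mu\to X$ and $f$ limits onto the developing map of the partial oper $(\nabla_{\hbar,0},L)$ of Theorem~\ref{conflimit}.

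Finally, for (3) let $\deg(L)=0$. Then $\deg(\alpha)=\deg(\beta)=2g-2$, so $\Div(\alpha)\ge\Div(\beta)$ forces $\Div(\alpha)=\Div(\beta)$; the balanced metric $h_L^{2}=\abs{\beta}/\abs{\alpha}$ is smooth and solves Hitchin's equation with $F_{h_L}=0$. Substituting, $\mu(\hbar,R)=\lambda(\hbar,R)\,\conj{\phi}/\abs{\phi}$ with $\phi:=\alpha\beta\in\holosecs{K^2}$, so that, along the ray (fixed $\hbar$, hence fixed $\arg\lambda$), $\mu=\abs{\lambda}\,\conj{\psi}/\abs{\psi}$ with $\psi=e^{-i\arg\lambda}\phi$ a fixed holomorphic quadratic differential and $\abs{\lambda}\in[0,1)$. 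Since the Teichmüller geodesic rays issuing from $X$ are exactly the curves $t\mapsto X_{t\,\conj{\psi}/\abs{\psi}}$, $t\in[0,1)$, for a fixed holomorphic quadratic differential, and $\abs{\lambda}$ is an increasing function of $\abs{\hbar R}$, the curve $R\mapsto X_{\mu(\hbar,R)}$ is a reparametrised geodesic ray; the strict bound $\abs{\hbar R}<1$ is precisely what keeps it inside $\mathscr{T}(M)$.

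The main obstacle is the Schwarz-type estimate $\|\mu\|_\infty\le1$: its delicate point is the behaviour of the harmonic metric $h_L$ near the branch points $\Div(\beta)$, where $h_L$ may degenerate and the comparison function $v$ must be controlled using $\Div(\alpha)\ge\Div(\beta)$. Once this is in hand the endpoint identifications at $R=1$ and $R\to0$ are matching problems against results already available (\cite{biswas:2021aa} and Theorem~\ref{conflimit}).
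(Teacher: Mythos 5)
Your strategy coincides with the paper's (define $\mu$ by forcing $L$ to be holomorphic for $\nabla_{\hbar,R}^{(0,1)_\mu}$, bound $\|\mu\|_\infty$ by a maximum principle, invoke Gunning's criterion, and read off the Teichm\"uller form of $\mu$ in degree zero), but there is a genuine gap at the analytic heart. You only assert the non-strict estimate $h_L^2\abs{\alpha}\leq\abs{\beta}$ and conclude $\abs{\mu}\leq\abs{\lambda}\leq 1$ ``with strict inequality once $\abs{\hbar R}<1$''. The theorem, however, allows $\abs{\hbar R}=1$ when $\deg(L)>0$, and that boundary case is exactly where the branched hyperbolic endpoint $\mathcal{P}(\hbar,1)$ with $\abs{\hbar}=1$ of statement (2) lives; there your bound gives only $\abs{\mu}\leq1$, which is not a Beltrami differential. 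What is needed, and what Lemma~\ref{lem:sup-mu-proof} proves, is the \emph{strict} pointwise inequality $h_R^2\abs{\alpha/\beta}<1$. Its proof uses that $\deg(L)>0$ forces the holomorphic section $\alpha/\beta\in\holosecs{L^4}$ to have zeros, so $u=\log\abs{(\alpha/\beta)h_R^2}^2$ is non-constant and tends to $-\infty$ at those zeros; excising small disks around them, the vortex equation turns the problem into a sinh--Gordon equation $\Delta u = c\,\sinh(u/2)$ with $c\geq 0$, and the classical maximum principle on the complement forces $u<0$ everywhere. Compactness of $M$ then upgrades this to $\sup\abs{\mu}<1$ even when $\abs{\hbar^2R^2}=1$. (Note also that the delicate locus is not where ``$h_L$ degenerates'' --- the harmonic metric is smooth --- but the zeros of $\alpha/\beta$, whose existence is precisely what makes the strict estimate work.)

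A second, smaller gap is the identification of $\mathcal{P}(\hbar,1)$ with the branched hyperbolic structure of \cite{biswas:2021aa}: matching ``the same Beltrami coefficient and holonomy'' does not determine a projective structure, since the fiber of the holonomy map on $\mathscr{B}(M)$ is infinite and discrete (as the paper itself recalls in its final section). The paper instead proves directly that for $\abs{\hbar}^2R^2=1$ the connection $\nabla_{\hbar,R}$ preserves the real structure $\tau(v)=C\conj{v}$ with $C=\begin{psmallmatrix} 0 & h_R^{-1}\\ h_R & 0\end{psmallmatrix}$, and that the distinguished subbundle $L$ avoids the fixed locus of $\tau$; this shows that the developing map of \emph{your} structure, not merely of some structure with the same holonomy, lands in $\HH^2$. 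The remainder of your argument --- the computation of the second fundamental form and branching divisor, $\mu\to 0$ as $R\to 0$, and the degree-zero reduction $\mu=\abs{\lambda}\,\conj{\psi}/\abs{\psi}$ with $\psi=e^{-i\arg\lambda}\alpha\beta$ a holomorphic quadratic differential giving the Teichm\"uller ray --- matches the paper.
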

	
	Statement (1) of the above theorem is given in Theorems~\ref{main theorem} (in the case $\deg(L)>0$) and \ref{main theorem degree zero} (in the case $\deg(L)>0$). The continuity statement (2) is discussed in Section~\ref{section curves} (cf.~\ref{continuity on configuration space}). Statement (3) is given in Theorem~\ref{thm:teichmuller-geodesic}.
	
	The organization and results of the paper are as follows.
	We begin by recalling the non-abelian Hodge correspondence and the conformal limit in Section~\ref{section conformal limit}. We give a new proof of the existence of the conformal limit for any polystable $\slr$-Higgs bundle with non-zero Toledo invariant in Section~\ref{section conformal limit existence}, using the simple
	Proposition~\ref{change solution} regarding the symmetry of Hitchin's equation.
	Section~\ref{expository} is expository, collecting some known facts, which we write as tailored by our needs.
	We recall the definitions of complex projective structure in Section~\ref{section projective structures} and of the branched case in Section~\ref{section branched case}. We state Gunning's transversality criterion and its relation with (partial) opers in Sections~\ref{section Gunning} and \ref{section partial oper}. We finally prove the conformal limit to be a partial oper when the Toledo invariant is non-zero in Section~\ref{section conformal limit is oper}. 
	In Section~\ref{section beltrami differentials} we recall some facts about Beltrami differentials and classical Teichmüller theory. Section~\ref{section branched projective structures coming from the limit} presents the main results, showing that, under some conditions determined by Brill-Noether considerations, the family $\nabla_{\hbar, R}$ appearing in the conformal limit is the holonomy of a branched projective structure which is compatible with a Riemann surface structure $X_\mu$ determined by a Beltrami differential $\mu(\hbar, R)$ which we explicitly calculate. 
	The case of zero Toledo invariant is approached in Section~\ref{degree zero section}, where we restate and prove similar results, namely that the conformal limit exists and it is a partial oper (Section~\ref{section the conformal limit degree zero}) and that, under similar conditions, the connections $\nabla_{\hbar, R}$ are associated to branched projective structures again compatible with $X_{\mu(\hbar,R)}$, for some explicit Beltrami differential $\mu(\hbar,R)$ (Section~\ref{section projective structures degree zero}).
	Finally, we provide some geometric understanding of the results in Section~\ref{section geometric interpretation of the results}. We show, in Section~\ref{section teichmuller geodesics} that in the case of zero Toledo invariant the curve $R \to X_{\mu(\hbar,R)}$, for fixed $\hbar$, is a (reparametrization) of a geodesic for the Teichmüller metric in $\mathscr{T}(M)$. In Section~\ref{sections reality properties}  we also give conditions on $\hbar$ and $R$ under which the constructions produce branched hyperbolic structures.
	
	\subsection*{Acknowledgements}
	We would like to thank Q. Li for useful discussions and the
        referee for a careful reading of the manuscript.
	
	\section{The conformal limit} \label{section conformal limit}
	
	\subsection{Preliminaries}
	In this section, we introduce the conformal limit following \cite{dumitrescu:2021uz} and \cite{collier:2018aa}.
	We consider $X$ a closed Riemann surface of genus $g\geq2$ and $E$ a fixed (necessarily trivial) $\ssl(n,\CC)$-vector bundle of rank $n$ over $X$. We view an $\ssl(n,\CC)$-Higgs bundle as a pair $(\DDD_E, \Phi)$ where $\DDD_E$ is a $\DDD$-operator defining a holomorphic structure on $E$, which has trivialized determinant, and the Higgs field $\Phi\in\forms{1,0}{\End_0(E)}$ is a $\DDD_E$-holomorphic $(1,0)$-form with values in the traceless endomorphism bundle $\End_0(E)$. 
	\begin{definition}
		Let $(\DDD_E, \Phi)$ be a $\ssl(n,\CC)$-Higgs bundle. Let $H$ be a Hermitian metric on $E$ and let $F_{A_H}$ be the curvature of the Chern connection $A_H$ determined by $H$ and $\DDD_E$. Fix $R\in\RR^+$. A metric which induces the trivial metric on $\det(E)$ is called \emph{harmonic with parameter $R$} if it satisfies the $R$-scaled version of Hitchin's equation:
		\begin{equation} \label{Hitchin eq}
			F_{A_{H}} + R^2\,[\Phi,\Phi^{*_H}]=0.
		\end{equation}
	\end{definition}
	\begin{remark}
		If $R=1$ we get the usual Hitchin equation for the Higgs bundle $(\DDD_E,\Phi)$, and a metric which is harmonic with parameter $1$ is simply called \emph{harmonic}. 
		Thus a metric is harmonic with parameter $R$ if and only if it is a solution to Hitchin's equation for the Higgs bundle $(\DDD_E,R\Phi)$.
	\end{remark} 
	The existence of such a metric when certain stability conditions on $(\DDD_E, \Phi)$ are met is a part of the celebrated non-abelian Hodge correspondence.
	Recall that a $\ssl(n,\CC)$-Higgs bundle $(\DDD_E,\Phi)$ is \emph{(semi)stable} if every $\Phi$-invariant holomorphic subbundle $L$ of $E$ has (non-positive) negative degree. Further, it is called \emph{polystable} if it is a direct sum of stable Higgs bundles (of degree $0$). 
	\begin{theorem} 
		A $\ssl(n,\CC)$-Higgs bundle $(\DDD_E,\Phi)$ is polystable if and only if, for each $R\in\RR^+$, there is a harmonic metric with parameter $R$. This metric is unique if $(\DDD_E,\Phi)$ is stable.
	\end{theorem}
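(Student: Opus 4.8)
The plan is to reduce the statement to the classical Hitchin--Kobayashi correspondence for Higgs bundles (due to Hitchin and Simpson), which we quote rather than reprove, by exploiting the rescaling observation already recorded in the Remark. The key point is that the parameter $R$ can be absorbed into the Higgs field. Indeed, since $(R\Phi)^{*_H}=R\,\Phi^{*_H}$ for $R\in\RR^+$, Hitchin's equation with parameter $R$, namely $F_{A_H}+R^2[\Phi,\Phi^{*_H}]=0$, can be rewritten as $F_{A_H}+[R\Phi,(R\Phi)^{*_H}]=0$, which is precisely the unscaled Hitchin equation for the rescaled pair $(\DDD_E,R\Phi)$. Thus a metric $H$ (inducing the trivial metric on $\det E$) is harmonic with parameter $R$ for $(\DDD_E,\Phi)$ if and only if it is harmonic, in the usual sense, for $(\DDD_E,R\Phi)$.

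First I would check that polystability is insensitive to this rescaling. For any fixed $R\in\RR^+$ a holomorphic subbundle $L\subseteq E$ is $\Phi$-invariant if and only if it is $R\Phi$-invariant, because $R\neq 0$; moreover the degrees $\deg(L)$ and $\deg(E)$ do not depend on the Higgs field. Hence $(\DDD_E,\Phi)$ is semistable, stable, or polystable if and only if $(\DDD_E,R\Phi)$ is, respectively. In particular the three notions -- polystability of $(\DDD_E,\Phi)$, polystability of $(\DDD_E,R\Phi)$, and (for each individual $R$) the existence of a harmonic metric with parameter $R$ -- are all linked through the single classical equivalence applied to the pair $(\DDD_E,R\Phi)$.

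With this reduction in hand, I would invoke the classical statement: a $\ssl(n,\CC)$-Higgs bundle is polystable if and only if it admits a metric solving the unscaled Hitchin equation and inducing the trivial metric on the determinant, with uniqueness of the solution whenever the bundle is stable. Applying this to $(\DDD_E,R\Phi)$ yields, for each fixed $R$, a harmonic metric with parameter $R$ for $(\DDD_E,\Phi)$ exactly when $(\DDD_E,R\Phi)$ -- equivalently $(\DDD_E,\Phi)$ -- is polystable. Both directions of the asserted equivalence follow, since the quoted statement is itself an equivalence and $H\mapsto H$ is a bijection between harmonic metrics with parameter $R$ for $(\DDD_E,\Phi)$ and ordinary harmonic metrics for $(\DDD_E,R\Phi)$; the uniqueness in the stable case transfers through the same bijection, using that stability of $(\DDD_E,\Phi)$ is equivalent to stability of $(\DDD_E,R\Phi)$.

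The genuinely hard analytic content -- the existence and uniqueness of solutions to Hitchin's equations -- is entirely contained in the cited non-abelian Hodge correspondence, so the only obstacle on our side is verifying that the substitution $\Phi\mapsto R\Phi$ is simultaneously compatible with the stability condition and with the equation, which is the elementary bookkeeping above. I would therefore expect no essential difficulty here: the main point is simply to make the reduction explicit and to observe that quantifying over all $R\in\RR^+$ is harmless, since polystability of $(\DDD_E,\Phi)$ forces polystability of $(\DDD_E,R\Phi)$ for every $R$ at once.
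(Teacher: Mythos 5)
Your proposal is correct and matches the paper's intent exactly: the paper states this theorem without proof as an instance of the classical Hitchin--Simpson correspondence, having already recorded in the preceding remark that a metric is harmonic with parameter $R$ precisely when it solves the unscaled Hitchin equation for $(\DDD_E,R\Phi)$, and your explicit check that (poly)stability is insensitive to the rescaling $\Phi\mapsto R\Phi$ is the only bookkeeping needed.
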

	The existence of such a metric allows one to use $R$ to deform the flat connection associated to the Higgs Bundle. 
	Given an $\ssl(n,\CC)$-Higgs bundle $(\DDD_E,\Phi)$ and a fixed $\hbar\in \CC^*$ one has the $\RR^+$-family of flat connections
	\begin{equation}  \label{family of flat}
		\nabla_{\hbar,R}=A_{H_R} + \hbar^{-1} \Phi + \hbar R^2 \Phi^{*_{H_R}}
	\end{equation}
	where $H_R$ is the harmonic metric with parameter $R\in \RR^+$ for $(\DDD_E,\Phi)$, $A_{H_R}$ is the Chern connection for $\DDD_E$ and $H_R$, and the adjoint $*_{H_R}$ is taken also with respect to the metric $H_R$.
	
	\begin{definition}
		The \emph{$\hbar$-conformal limit} of $(\DDD_E,\Phi)$ is the connection
		\begin{equation}
			\nabla_{\hbar,0}:=\lim_{R\to 0} \nabla_{\hbar,R}
		\end{equation}
		when it exists.
	\end{definition}
	\begin{remark}
		In the case $\hbar=1$ note that, if $R=1$, the connection $\nabla_{1,1}$ is just the one given by the usual non-abelian Hodge correspondence. 
	\end{remark}
	The existence of this limit was established in \cite{dumitrescu:2021uz,collier:2018aa} for the stable case. In the case of $\slr$-Higgs bundles, we shall give a more direct argument both in the stable case (in Section~\ref{section conformal limit existence}) and in the polystable case (in Section~\ref{section the conformal limit degree zero}). We do this by taking advantage of the fact that the structure group $\CC^*$ is abelian for $\slr$-Higgs bundles.
	\subsection{Explicit limit in the $\slr$ case} 
	\label{section conformal limit existence}
	Recall that we can view an $\slr$-Higgs bundle as an $\sl$-Higgs bundle $E$ with a holomorphic decomposition $E=L\oplus L^{-1}$, where $L$ is a holomorphic line bundle. Further, the Higgs field has the form $\Phi=\begin{psmallmatrix}
		0 & \alpha \\
		\beta & 0
	\end{psmallmatrix}$ for this decomposition, where $\alpha \in \holosecs{L^2 K}$ and $\beta \in \holosecs{L^{-2} K}$. For such a Higgs bundle the \emph{Toledo invariant} $\deg(L)$ satisfies a Milnor-Wood type inequality $0\leq |\deg(L)|\leq g-1$. 
	We can assume, by duality, that an $\slr$-Higgs bundle has $\deg(L)\geq0$.
	
	The $\sl$-Higgs bundle stability condition for the case $\deg(L)>0$ is simply $\beta\neq0$. This is because $L$ is the maximal destabilizing subbundle of $E$, which is not preserved by the Higgs field if and only if $\beta \neq 0$. There are no strictly polystable cases. For the case $\deg(L)=0$ the condition is more delicate, and it is analyzed in Section~\ref{degree zero section}. 
	
	The harmonic metric $H$ in either case is known to diagonalize (\cite{hitchin:1987vg} or \cite[Proposition 5.2]{alessandrini:2019aa}, for example) 
	with respect to this decomposition, so $H=\begin{psmallmatrix}
		h & 0\\0 &h^{-1}
	\end{psmallmatrix}$, where $h$ is a metric in the line bundle $L$. 
	
	If we choose a holomorphic frame for $L$, and the induced holomorphic frame in $E$, $h$ is locally given by a positive function, still denoted by $h$, or $h(z)$ if we want to make explicit the dependence on a complex coordinate $z$ in $X$. The Chern connection for $H$ is given in this frame by $A_{H}=d + \begin{psmallmatrix}	
		\DD\log h & 0\\
		0 &- \DD\log h
	\end{psmallmatrix}=\begin{psmallmatrix}	
		\DDz \log h \,dz & 0\\
		0 &- \DDz \log h \,dz
	\end{psmallmatrix}$.
	Further $\alpha$ and $\beta$ are given by 1-forms $\alpha = \alpha(z) dz$ and $\beta =\beta(z) dz$. Recalling that locally $\Phi^{*_H}=H^{-1}\conj{\Phi}^T H$, Hitchin's equations (\ref{Hitchin eq}) for this case $(R=1)$ read
	$$\begin{aligned}
		0&=F_{A_H} +[\Phi,\Phi^{*_H}]\Leftrightarrow \\
		0&= \begin{pmatrix}	
			\DDD	\DD\log h & 0\\
			0 &- \DDD \DD\log h
		\end{pmatrix} + \begin{pmatrix}
			0 & \alpha \\
			\beta & 0
		\end{pmatrix} \wedge \begin{pmatrix}
			0 & \conj{\beta} h^{-2} \\
			\conj{\alpha}h^2 & 0
		\end{pmatrix} + \begin{pmatrix}
			0 & \conj{\beta} h^{-2} \\
			\conj{\alpha}h^2 & 0
		\end{pmatrix} \wedge \begin{pmatrix}
			0 & \alpha \\
			\beta & 0
		\end{pmatrix}.\\
	\end{aligned}$$
	
	This simplifies to the single scalar (and unscaled) vortex equation
	\begin{equation*}
		\DDDz \DDz \log h = |\alpha|^2h^2 -|\beta|^2h^{-2}.
	\end{equation*}
	The $R$-scaled version is given in the following definition and just says that $H_R=\begin{psmallmatrix}
		h_R & 0\\0 &h_R^{-1}
	\end{psmallmatrix}$ is harmonic with parameter $R$ for $\left(E=L\oplus L^{-1}, \Phi=\begin{psmallmatrix}
		0 & \alpha \\
		\beta & 0
	\end{psmallmatrix}\right)$.
	\begin{definition} \label{vortex eqs}
		Let $L$ be a holomorphic line bundle and consider the sections $\alpha \in \holosecs{L^2 K}$ and $\beta \in \holosecs{L^{-2} K}$, and $R\in\RR^+$.
		A metric $h_R$ in $L$ \emph{solves the $R$-scaled vortex equation for $(\alpha, \beta)$} if locally in a holomorphic frame
		\begin{equation*}
			\DDDz \DDz \log h_R = R^2\left(|\alpha|^2h_R^2 -|\beta|^2h_R^{-2}\right).
		\end{equation*}
	\end{definition}
	\begin{remark} \label{zero curvature}
		For $R=0$ this is an equation for a metric of zero curvature. If such a Hermitian metric exists on $L$ then $\deg(L)=0$.
	\end{remark}
	\begin{example} \label{example hitchin}
		If the Higgs bundle lies in a Hitchin component then $$\left(E=K^{1/2}\oplus K^{-1/2}, 
		\Phi=\begin{psmallmatrix}
			0 & q \\
			1 & 0
		\end{psmallmatrix}\right),$$
		for a choice $K^{1/2}$ of square root of the canonical bundle $K$, from the $2^{2g}$ available, and $q\in \holosecs{K^{2}}$ a quadratic differential. 
		The $R$-scaled vortex equation is then just the $R$-scaled version of the abelian vortex equation as in Hitchin~\cite{hitchin:1987vg}
		\begin{equation*}
			\DDDz \DDz \log h = R^2 \left( |q|^2h^2 -h^{-2}\right).
		\end{equation*}
		In this case $g=h^{-2}$ is a metric in $(K^{1/2})^{-2}\cong K^{-1}\cong TX$, which satisfies
		\begin{equation}
			\DDDz \DDz \log g = 2 R^2 g\left(1-\frac{q\conj{q}}{g^2}\right).
		\end{equation}
		For $R=1$ and $q=0$, this is the equation for a Riemannian metric $g_0$ of constant negative curvature $-4$. 
	\end{example}
	In the case of $\slr$-Higgs bundles, the family of connections (\ref{family of flat}) that comes up in the conformal limit is thus
	\begin{align}
		\label{R family}
		\nabla_{\hbar,R}&=A_{H_R} + \hbar^{-1} \Phi + \hbar R^2 \Phi^{*_{H_R}} \notag\\
		&=d +  \begin{pmatrix}	
			\DD\log h_R & 0\\
			0 &- \DD\log h_R
		\end{pmatrix} + \hbar^{-1} 
		\begin{pmatrix}	
			0 & \alpha\\
			\beta &0
		\end{pmatrix} + \hbar R^2
		\begin{pmatrix}	
			0 & \conj{\beta} h_R^{-2} \notag\\
			\conj{\alpha}h_R^2 & 0
		\end{pmatrix}\\
		&=d+\begin{pmatrix}	
			\DD\log h_R & \hbar^{-1}\alpha + \hbar R^2 \conj{\beta} h_R^{-2}\\
			\hbar^{-1} \beta + \hbar R^2 \conj{\alpha}h_R^2  &- \DD\log h_R
		\end{pmatrix}.
	\end{align}
	
	The calculations \cite{dumitrescu:2021uz,collier:2018aa} of the conformal limit use the inverse function theorem in infinite dimensional Banach spaces to find suitable gauge transformations acting on $\nabla_{\hbar,R}$.  As we shall see next, in the case of $\slr$-Higgs bundles the calculation can be done directly, using that the moduli spaces of Higgs bundles and of flat connections are homeomorphic.

	The argument relates the solutions of the scaled equation with solutions of the unscaled one.
	\begin{proposition} \label{change solution}
		Let $L$ be a holomorphic line bundle, $\alpha \in \holosecs{L^2 K}$, $\beta \in \holosecs{L^{-2} K}$, and $R\in\RR^+$.
		A metric $h_R$ is a solution of the $R$-scaled vortex equation for $(\alpha,\beta)$ if and only if $h:= \frac{h_R}{R} $ is a solution of the unscaled vortex equation for $(R^2\alpha, \beta)$.
	\end{proposition}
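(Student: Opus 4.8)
The plan is to verify the claimed equivalence by a direct substitution in a local holomorphic frame, since both vortex equations are stated pointwise in such a frame. First I would set $h_R = R h$, so that in any local holomorphic trivialization the positive function representing $h_R$ is $R$ times the one representing $h$. Because $R$ is a positive constant, $\log h_R = \log R + \log h$, and the operator $\DDDz\DDz$ annihilates the constant $\log R$; hence the left-hand sides of the two equations coincide, $\DDDz\DDz\log h_R = \DDDz\DDz\log h$.

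Next I would transform the right-hand side. Substituting $h_R^2 = R^2 h^2$ and $h_R^{-2} = R^{-2}h^{-2}$ into the $R$-scaled equation for $(\alpha,\beta)$ yields
\[
  R^2\bigl(|\alpha|^2 h_R^2 - |\beta|^2 h_R^{-2}\bigr)
  = R^2\bigl(R^2|\alpha|^2 h^2 - R^{-2}|\beta|^2 h^{-2}\bigr)
  = R^4|\alpha|^2 h^2 - |\beta|^2 h^{-2}.
\]
Recognizing that $R^4|\alpha|^2 = |R^2\alpha|^2$ (the local coefficient of $R^2\alpha$ is $R^2$ times that of $\alpha$) while $|\beta|^2$ is untouched, this is exactly the right-hand side of the unscaled vortex equation for the pair $(R^2\alpha,\beta)$. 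Combined with the previous step, this shows that $h_R$ solves the $R$-scaled equation for $(\alpha,\beta)$ precisely when $h = h_R/R$ solves the unscaled equation for $(R^2\alpha,\beta)$.

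Since the assignment $h_R\mapsto h_R/R$ is a bijection on positive metrics, with inverse $h\mapsto Rh$, the single computation above delivers both implications at once and establishes the ``if and only if''. I would also note that scaling a Hermitian metric by the positive constant $1/R$ is a globally well-defined operation on $L$, independent of the chosen frame, so the local verification indeed proves the global statement. I do not expect any genuine obstacle: the whole content is the bookkeeping of how the powers of $R$ distribute, and the only point worth flagging is the asymmetry whereby $\alpha$ acquires the factor $R^2$ while $\beta$ is left alone. This asymmetry is forced by the overall prefactor $R^2$ combining with $h_R^{2}=R^2h^2$ to produce $R^4$, while cancelling against $h_R^{-2}=R^{-2}h^{-2}$.
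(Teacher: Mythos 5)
Your proposal is correct and is essentially the same direct substitution argument as the paper's proof: both use that $\log h_R=\log R+\log h$ is annihilated by $\DDDz\DDz$ up to the constant term and then track the powers of $R$ on the right-hand side, the only difference being the direction in which the substitution is written. No issues.
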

	\begin{proof}
		Let $h:=R^{-1} h_R$ be a solution of the unscaled vortex equation for $(R^2\alpha, \beta)$. This happens if and only if
		\begin{equation*}
			\DDDz \DDz \log h = |R^2\alpha|^2h^2 -|\beta|^2h^{-2}.
		\end{equation*}
		Given the fact that $R$ does not depend on $z$, i.e., $\DDz \log h=\DDz \log h_R-\DDz \log R=\DDz \log h_R$ , the expression is equivalent to
		\begin{equation*}
			\DDDz \DDz \log h_R= |R^2\alpha|^2R^{-2}h_R^2 - |\beta|^2 R^{2} h_R^{-2}=R^{2}\left(|\alpha|^2h_R^2 - |\beta|^2 h_R^{-2}\right),
		\end{equation*}
		and so $h_R$ solves the $R$-scaled vortex equation for $(\alpha,\beta)$.
	\end{proof}
	
	\begin{corollary}
		\label{limit exists in space of harmonic metrics}
		Let $\deg(L)>0$. Then the pointwise limit of $h:= \frac{h_R}{R} $ as
		$R\to 0$ exists as a metric and it is a solution of the unscaled vortex equation
		for $(0, \beta)$.
	\end{corollary}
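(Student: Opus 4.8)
The plan is to combine Proposition~\ref{change solution} with a maximum--principle argument on the closed surface $X$, thereby avoiding any infinite--dimensional implicit function theorem. By Proposition~\ref{change solution}, for each $R>0$ the rescaled metric $h=h_R/R$ solves the unscaled vortex equation for $(R^2\alpha,\beta)$; writing $u_R=\log(h_R/R)$ in a local holomorphic frame, this reads
\begin{equation*}
\DDDz\DDz u_R = R^4\abs{\alpha}^2 e^{2u_R} - \abs{\beta}^2 e^{-2u_R}.
\end{equation*}
The right--hand side, viewed as a function $F_R(u)$, is monotone increasing in $u$, so the equation obeys the comparison principle on $X$ (a subsolution lies everywhere below a supersolution, as one sees by inspecting the sign of $\DDDz\DDz(u-v)$ at an interior maximum of a difference, using $\beta\ne0$). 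The goal is to show that $u_R$ converges, as $R\to0$, to a solution of the equation for $(0,\beta)$, namely $\DDDz\DDz u=-\abs{\beta}^2 e^{-2u}$.

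First I would establish monotonicity in $R$. For $R_1<R_2$ one has $F_{R_1}(u)\le F_{R_2}(u)$, so the solution $u_{R_1}$ is a supersolution of the $R_2$--equation; comparison then gives $u_{R_2}\le u_{R_1}$, i.e.\ $u_R$ increases as $R\to0$. Next I would obtain a uniform upper bound. Since $\deg(L)>0$ and $\beta\ne0$, the Higgs bundle $(L\oplus L^{-1},\begin{psmallmatrix}0&0\\\beta&0\end{psmallmatrix})$ is stable, so the existence theorem for harmonic metrics provides a solution $u_0$ of the limiting $(0,\beta)$--equation. As $\DDDz\DDz u_0=-\abs{\beta}^2e^{-2u_0}\le F_R(u_0)$ for every $R>0$, the function $u_0$ is a supersolution of each $R$--equation, whence $u_R\le u_0$. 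Fixing $R_0$, the family therefore satisfies $u_{R_0}\le u_R\le u_0$ on $X$ for $0<R\le R_0$, so the monotone pointwise limit $u_*=\lim_{R\to0}u_R$ exists and is bounded, yielding an honest metric $h_*=e^{u_*}$.

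It remains to verify that $u_*$ solves the $(0,\beta)$--equation. The uniform $L^\infty$ bound makes the right--hand sides $F_R(u_R)$ uniformly bounded, so elliptic regularity (a $W^{2,p}$ estimate followed by Schauder bootstrapping) furnishes uniform $C^{2,\gamma}$ bounds on $u_R$ for $R\le R_0$. By Arzel\`a--Ascoli some sequence $R_k\to0$ converges in $C^2$, necessarily to the pointwise limit $u_*$; since every subsequential limit must coincide with $u_*$, in fact $u_R\to u_*$ in $C^2$. Passing to the limit and using that $R^4\abs{\alpha}^2e^{2u_R}\to0$ (the factor $\abs{\alpha}^2e^{2u_R}$ being bounded) gives $\DDDz\DDz u_*=-\abs{\beta}^2e^{-2u_*}$, as desired; uniqueness for the stable bundle moreover identifies $u_*$ with $u_0$.

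The main obstacle is exactly this last passage to the limit: pointwise monotone convergence alone does not license differentiating through the nonlinearity, and it must be promoted to $C^2$ convergence by the uniform a priori estimates above. The hypothesis $\deg(L)>0$, together with $\beta\ne0$, enters in two essential places—it guarantees stability of both the approximating and the limiting Higgs bundles, and hence the existence of the $R$--independent comparison supersolution $u_0$ that pins the family from above.
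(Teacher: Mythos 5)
Your proof is correct, but it takes a genuinely different route from the paper. The paper's proof is soft: after invoking Proposition~\ref{change solution}, it observes that $R\mapsto(E,\Phi(R))$ with $\Phi(R)=\begin{psmallmatrix}0&R^2\alpha\\ \beta&0\end{psmallmatrix}$ is a continuous path of \emph{stable} Higgs bundles all the way down to $R=0$ (stability at $R=0$ using $\deg(L)>0$), and then appeals to the continuity of the non-abelian Hodge correspondence to conclude that the associated harmonic metrics converge to the one for $\Phi(0)=(0,\beta)$. You instead give a direct PDE argument: monotonicity of $R\mapsto u_R$ via the comparison principle for the vortex equation (whose right-hand side is increasing in $u$), a uniform upper barrier furnished by the solution $u_0$ of the limiting $(0,\beta)$-equation (whose existence again uses stability from $\deg(L)>0$ and $\beta\neq 0$), and then elliptic bootstrapping plus Arzel\`a--Ascoli to upgrade the monotone pointwise limit to $C^2$ convergence so that one may pass to the limit in the equation. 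The paper's argument is shorter but leans on the (unelaborated) continuity of the Hodge correspondence as a map into harmonic metrics; yours is longer but self-contained, needs only existence and uniqueness for stable bundles, yields stronger ($C^2$, and by further bootstrapping $C^\infty$) convergence, and is very much in the spirit of the maximum-principle argument the paper itself deploys later in Lemma~\ref{lem:sup-mu-proof}. Two small points worth tightening: in the comparison step, monotonicity of $F_R$ alone gives only $0\geq \DDDz\DDz(u-v)\geq F_R(u)-F_R(v)\geq 0$ at a positive interior maximum, so you need the strong maximum principle (writing $F_R(u)-F_R(v)=c\cdot(u-v)$ with $c\geq 0$) together with $\beta\neq 0$ to exclude a positive constant difference; and since $u=\log h$ is only locally defined, the comparison should be phrased for the globally defined difference of logarithms of two metrics on $L$. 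Both are routine.
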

	
	\begin{proof}
		By Propostion~\ref{change solution} $h$ is a solution of the
		unscaled vortex equations for $(R^2\alpha, \beta)$. This describes a
		continuous path $(E=L\oplus L^{-1},\Phi(R))$ of polystable Higgs
		bundles, with
		$\Phi(R)=\begin{psmallmatrix}
			0 & R^2 \alpha\\
			\beta & 0 \end{psmallmatrix}$.
		Note that it is a well-defined path since, $(E,\Phi(R))$ is stable
		for all $R$. This happens even for $R=0$, since then the Higgs bundle
		is stable because $\deg(L)>0$. (The subbundle $L$ is maximally
		destabilizing.) Under the homeomorphism to the space of harmonic
		bundles, this is mapped to a continuous path of metrics, and
		$\lim_{R\to 0} h$ is the metric associated to $\Phi(0)$.
	\end{proof}
	
	\begin{remark} \label{degree zero}
		If $h_R$ is a solution of the $R$-scaled vortex equation for a fixed
		$(\alpha, \beta)$, it was already noted in \cite{dumitrescu:2021uz}
		(for the Hitchin component) that $\lim_{R\to 0} h_R$ does not exist
		in general. In our case this can be seen directly from the fact that a solution
		for the $0$-scaled equation is just a metric with curvature equal to
		zero (cf.\ Remark~\ref{zero curvature}), and thus it can only exist
		in the case $\deg(L)=0$, but not for general $L$. More precisely,
		in view of the Corollary, it follows that $h=h_R/R$ tends to the solution of the
		unscaled vortex equations with $(0,\beta)$, and so $h_R=h \,R$
		tends to zero as fast as $R\to0$ for $\deg(L)>0$.
		We note also that for $\deg(L)=0$ the limit $\lim_{R\to 0} h$ does not exist, since in that case $(0,\beta)$ defines a non-polystable Higgs bundle (cf.\ Section~\ref{degree zero section} for further details).
	\end{remark}

	Using Corollary~\ref{limit exists in space of harmonic metrics} we can now calculate the conformal limit.
	
	\begin{theorem}\label{conflimit} \cite[Prop.~5.1]{collier:2018aa}
		Let $X$ be a closed Riemann surface of genus $g\geq 2$. Consider the vector bundle $E=L\oplus L^{-1}$, with $1\leq\deg(L)\leq g-1$, and induced holomorphic structure $\DDD_E$ and Higgs field $\Phi=\begin{psmallmatrix}
			0 & \alpha\\
			\beta &0
		\end{psmallmatrix}$, where $\alpha \in \holosecs{L^2K}$ and $0\neq \beta \in \holosecs{L^2K}$. 
		
		Then, the $\hbar$-conformal limit $\nabla_{\hbar,0}$ of the $\sl$-Higgs bundle $(\DDD_E, \Phi)$ exists.
		Using the holomorphic frame of $E$ induced by a holomorphic frame of $L$, the coordinate representation of $\nabla_{\hbar,0}$ is
		\begin{equation}\label{eq:conflimit}
			\nabla_{\hbar,0}=d+\begin{pmatrix}	
				\DD\log h_0 & \hbar^{-1}\alpha + \hbar \conj{\beta} h_0^{-2}\\
				\hbar^{-1} \beta  &- \DD\log h_0
			\end{pmatrix},
		\end{equation}
		where $h_0$ is the solution of the unscaled vortex equations for $(0,\beta)$.
	\end{theorem}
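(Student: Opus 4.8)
The plan is to feed the reparametrised metric $h = h_R/R$ from Proposition~\ref{change solution} directly into the coordinate expression (\ref{R family}) for $\nabla_{\hbar,R}$ and to compute the limit entry by entry. By Corollary~\ref{limit exists in space of harmonic metrics} this $h$ solves the unscaled vortex equation for $(R^2\alpha,\beta)$ and, since $\deg(L)>0$ keeps the whole path $\Phi(R)=\begin{psmallmatrix} 0 & R^2\alpha \\ \beta & 0\end{psmallmatrix}$ stable down to $R=0$, it converges as $R\to 0$ to the metric $h_0$ solving the unscaled vortex equation for $(0,\beta)$. So the essential input—existence and convergence of the limiting metric—is already in hand, and what remains is to track how the scaling by $R$ interacts with the three types of matrix entries.

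The crux of the computation is the cancellation of the explicit powers of $R$. On the diagonal, because $R$ is constant in the coordinate $z$ we have $\DD\log h_R = \DD\log h$, so the scaling drops out entirely and the diagonal entry is simply $\DD\log h$. For the off-diagonal entries, writing $h_R = Rh$ gives $R^2 h_R^{-2} = h^{-2}$ and $R^2 h_R^{2} = R^4 h^2$. Hence the upper-right entry $\hbar^{-1}\alpha + \hbar R^2\conj{\beta}h_R^{-2}$ becomes $\hbar^{-1}\alpha + \hbar\,\conj{\beta}\,h^{-2}$ with no surviving explicit factor of $R$, while the lower-left entry $\hbar^{-1}\beta + \hbar R^2\conj{\alpha}h_R^2$ becomes $\hbar^{-1}\beta + \hbar R^4\,\conj{\alpha}\,h^2$, whose second summand carries a factor $R^4$. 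Letting $R\to 0$ and using $h\to h_0$ then produces the diagonal $\DD\log h_0$, the upper-right $\hbar^{-1}\alpha + \hbar\,\conj{\beta}\,h_0^{-2}$, and the lower-left $\hbar^{-1}\beta$ (the $R^4$ term vanishing), which is exactly (\ref{eq:conflimit}).

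The step demanding genuine care—and the main obstacle—is that passing to the limit in the diagonal term and in the summand $\hbar\conj{\beta}h^{-2}$ requires more than the pointwise convergence $h\to h_0$ furnished by the Corollary: one needs convergence of $\DD\log h$ and of $h^{-2}$, that is, convergence in a topology controlling at least first derivatives, so that $\DD$ may be exchanged with $\lim_{R\to 0}$. I would justify this by upgrading the continuity of the path $R\mapsto(E,\Phi(R))$ into the moduli space: under the non-abelian Hodge correspondence the associated solutions of Hitchin's equation depend smoothly on the data, by elliptic regularity together with smooth dependence on parameters, so the convergence $h\to h_0$ is in fact in the $C^\infty$ topology. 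Since $h_0$ is everywhere positive as a metric, $h^{-2}\to h_0^{-2}$ and $\DD\log h\to\DD\log h_0$ then follow, the entrywise limits above are immediate, and the explicit form of $\nabla_{\hbar,0}$ is established.
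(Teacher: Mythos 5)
Your proposal is correct and follows essentially the same route as the paper: substitute $h_R = Rh$ via Proposition~\ref{change solution}, invoke Corollary~\ref{limit exists in space of harmonic metrics} for the convergence $h\to h_0$, and take the limit entry by entry in \eqref{R family}. Your additional remark on upgrading the pointwise convergence of $h$ to convergence of $\DD\log h$ and $h^{-2}$ addresses a point the paper leaves implicit, and your justification via elliptic regularity and smooth dependence on the Higgs bundle data is the right one.
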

	
	\begin{proof}
		By Proposition~\ref{change solution}, for each $R\in \RR^+$, we can write $h_R$ as $h_R=h\, R$, where $h$ is the solution of the unscaled vortex equation for $(R^2 \alpha, \beta)$. By Corollary~\ref{limit exists in space of harmonic metrics}, the limit of $h$ as $R\to 0$ exists and we have $\lim_{R\to 0} h=h_0$. We also note that $\DD_z \log R=0$.
		
		We are now able to compute the following limits:
		\begin{align*}
			&\displaystyle \lim_{R\to 0} \DD_z \log h_R= \lim_{R\to 0} \DD_z \log \left(h \,R\right) =\lim_{R\to 0}\left( \DD_z \log h + \DD_z \log R \right)=\lim_{R\to 0}\DD_z \log h=\DD_z \log h_0,\\
			&\displaystyle \lim_{R\to 0}  R^2 h_R^2= \lim_{R\to 0} R^4 h=0, \\
			&\displaystyle \lim_{R\to 0}  R^2  h_R^{-2}= \lim_{R\to 0}  h^{-2}=h_0^{-2}.
		\end{align*}
		Taking $R\to 0$ in the family $\nabla_{\hbar, R}$ of (\ref{R family}) we then get the existence and explicit form of the conformal limit stated.
	\end{proof}
	
	\begin{remark} \label{continuity on configuration space}
		Note that for $R\neq0$, the family $\nabla_{\hbar,R}$ in \ref{R family}, as a function of the parameters $\hbar$ and $R$ into the configuration space of flat connections, is continuous. So, in fact, we have shown here that this continuity extends to $R=0$.
	\end{remark}
	
	\begin{remark} \label{components existence}
		The connected components of the moduli space of $\slr$-Higgs bundles are as follows \cite{hitchin:1987vg}. There are $2\cdot 2^{2g}$ Hitchin components corresponding to the maximal Toledo invariant $\abs{\deg(L)}=g-1$. Further, there are $2g-1$ non-maximal components, corresponding to $0\leq\abs{\deg(L)}<g$. The existence of the conformal limit is thus established for all the components except for the minimal one, i.e., the one for which $\deg(L)=0$. This will be done in Section~\ref{degree zero section}. 
	\end{remark}
	
	\begin{remark}
		Our argument can in fact be generalized to show the existence of the conformal limit in the configuration space for any polystable $\sl$-Higgs bundle. This will be treated elsewhere. Here we have limited ourselves to the $\slr$-case because our main interest lies in the connection with branched projective structures.
	\end{remark}
	
	\section{Branched projective structures and partial $\sl-$opers} \label{expository}

	In this section we recall some basic facts on branched projective structures and the closely related concept of partial $\sl-$opers. 
	
	\subsection{Projective structures} \label{section projective structures}
	A projective structure on a surface $M$ is given by a maximal atlas $\mathcal{A}=\{(U_\alpha,\phi_\alpha:U_\alpha\to\cp)\}$, whose charts are diffeomorphisms onto open sets of $\cp$ and whose transition functions are Möbius transformations. These are examples of locally homogeneous structures or $(G,X)$-manifolds, where $X=\cp$ and $G=\psl(2,\CC)$ (\cite[Chapter 3]{thurston:2022aa}).
	
	Equivalently, a projective structure is a pair $(\rho,d)$ where $\rho:\pi_1(M)\to \psl(2,\CC)$ is a representation, called the \emph{holonomy} of the structure, and $d:\widetilde{M}\to \cp$, the \emph{developing map}, is a $\rho$-equivariant local diffeomorphism with domain the universal cover $\widetilde{M}\to M$. The pair $(\rho,d)$ is uniquely defined up to the action of $G$ by conjugation on $\rho$ and by post-composition on $d$. 
	
	A third description of projective structures is obtained by constructing the flat $\cp$-bundle $\PP E_\rho\to M$ whose holonomy is given by $\rho$. The developing map is then canonically identified with a section $s$ of $\PP E_\rho$ and it is a local diffeomorphism if and only if this section is transverse to the flat connection on $\PP E_\rho$. The equivalence is now given by the usual action of the gauge group simultaneously on flat connections and sections.
	
	Each of these structures is then considered modulo diffeomorphisms which are locally in $\psl(2,\CC)$ and are isotopic to the identity, in a construction similar to Teichmüller space. Any incarnation of this space is denoted by $\mathscr{P}(M)$.
	
	We note that, since $\psl(2,\CC)$ acts holomorphically on $\cp$, every projective structure $\mathcal{P}$ seen as maximal atlas on a surface $M$, induces a Riemann surface structure $X$, because this maximal atlas is, in particular, an atlas for a complex manifold in the usual sense. It is of course possible that projective structures $\mathcal{P}$ and $\mathcal{P}'$ define the same complex structure $X$ even though they are not equivalent as projective structures. We then say that $\mathcal{P}$  (and $\mathcal{P}'$) are \emph{compatible} with $X$.
	One can rephrase this statement by saying that the projective structure $\mathcal{P}$ is a \emph{stiffening} of the fixed complex structure $X$ (\cite[page 112]{thurston:1997aa}). In this case, the local charts $\phi_\alpha:U_\alpha\to\cp$ are holomorphic as local functions in $X$ and can be written in holomorphic coordinates as the identity map $z \mapsto z$.
	Further, the universal cover $\widetilde{M}$ can be made into a Riemann surface $\widetilde{X}$.  The developing map of $\mathcal{P}$ becomes a local biholomorphism for $\widetilde{X}$. Also the total space $\PP E_\rho$ becomes a complex manifold and the section $s$ becomes a holomorphic section. 
	We denote by $\mathscr{P}(X)$ the space of projective structures which are a stiffening of $X$ modulo the same action of diffeomorphisms which are locally in $\psl(2,\CC)$ and are isotopic to the identity. 
	Observe that we have a map 
	\begin{equation*}
		\mathscr{P}(M) \to \mathscr{T}(M)
	\end{equation*}
	sending a projective structure $\mathscr{P}$ to its induced Riemann surface structure $X$, and for which the fiber over $X$ is $\mathscr{P}(X)$.

	\subsection{The branched case} \label{section branched case}
	
	We are going to describe branched projective structures following Mandelbaum \cite{mandelbaum:1972aa,mandelbaum:1973aa} and Simpson \cite{simpson:2010aa} (see also \cite{biswas:2023aa}). As usual, the idea of branching involves considering geometric structures on $M\backslash S$, where $S$ is a discrete set of points on $M$. Furthermore, one imposes regularity conditions around these points, so that the structures obtained are still controlled by the geometry of $M$.
	
	A \emph{branched projective structure} is a maximal atlas $\mathcal{A}=\{(U_\alpha,\phi_\alpha:U_\alpha\to\cp)\}$ whose charts are topological singly-branched $r$-coverings from topological disks $U_\alpha$ of $M$ onto open sets of $\cp$, and whose transition functions are Möbius transformations. This is to say that the charts $\phi_\alpha$ are $r$-coverings, $r\geq1$ an integer, except possibly at a single point of their domain $p_\alpha$. Further we require that $p_\alpha$ is the only point on its $\phi_\alpha$-pre-image, i.e., $\phi_\alpha^{-1}(\phi_\alpha(p_\alpha))=\{p_\alpha\}$.  The set of points $p_\alpha$ is well defined and discrete. For compact $M$ it is thus finite and we denote it by $S=\{p_k\}_{k=1}^{n}$; the points $p_k$ are called the \emph{branching points} of $\mathcal{A}$.
	
	Under these conditions, the atlas $\mathcal{A}$ will induce a Riemann surface structure $X$. We say that the branched projective structure $\mathcal{A}$ is \emph{compatible} with $X$. Using the complex coordinates of $X$, the condition on the charts is equivalent to $\phi_\alpha$ being written as the map $z\mapsto z^r $, with $r>1$ only at the branching points $p_k$.
	The integer $r-1$ is independent of coordinates and thus well defined for $\mathcal{A}$. It is called the \emph{order of $\mathcal{A}$ at $p_k$}, and denoted by $\ord_\mathcal{A}(p_k)$. Each branched projective structure compatible with $X$ comes together with the so-called \emph{branching divisor} $D$. This is the effective divisor defined by 
	$$
	D=\sum_{k=1}^{n} \ord_\mathcal{A}(p_k) \cdot p_k.
	$$
	It is trivial if and only if the structure is a projective structure as defined in the previous section. To be clear we will call such structures unbranched.
	
	Equivalently a branched projective structure can be given by
	a pair $(\rho,d)$ where $\rho:\pi_1(M)\to \psl(2,\CC)$ is the holonomy of the structure, and $d:\widetilde{M}\to \cp$ is the $\rho$-equivariant developing map, which is now allowed to have vanishing derivative of order $r-1$ at the lifts of $p_k$ to $\widetilde{M}$. The pair $(\rho,d)$ is again uniquely defined up to the action of $\psl(2,\CC)$ by conjugation on $\rho$ and by post-composition on $d$. 
	
	Building again the flat projective bundle $\PP E_\rho\to M$ the developing map corresponds to a section $s$ which is generically transverse to the flat connection, in the sense that the section is non-horizontal except at the points $p_k$, where the flat connection annihilates $s$ to order $r-1$. The equivalence here is again given by the usual action of the gauge group on flat connections and sections.
	
	We then consider any of these spaces of structures modulo the action of diffeomorphisms locally in $\psl(2, \CC)$ and isotopic to the identity, 
	and denote them by
	$\mathscr{B}(M)$. We also denote the space of branched projective structures compatible with $X$ by $\mathscr{B}(X)$. 
	
	As in the unbranched case we have a map 
	\begin{equation*}
		\mathscr{B}(M) \to \mathscr{T}(M)
	\end{equation*}
	sending a branched projective structure to its induced Riemann surface structure $X$, and whose fiber over $X$ is $\mathscr{P}(X)$.

	\subsection{Gunning's criterion} \label{section Gunning}
	The relation between opers and projective structures is well known. Here we recall the relation for the branched case.
	We let $(\rho,d)\in \mathscr{B}(X)$ be a branched projective structure compatible with the Riemann surface $X$, and with branching divisor $D$. We will look only at those structures whose representation lifts to $\sl$, that is, we assume there is a representation $\widetilde{\rho}:\pi_1(M)\to \sl$ which covers $\rho$ with respect to the quotient map $\Proj:\sl \to \psl(2,\CC)$. This happens if and only if $\deg(D)$ is even (\cite[Theorem 2]{mandelbaum:1973aa} and also \cite[Corollary 11.2.3.]{gallo:2000aa}), and the lift is non-unique in general. In particular, for unbranched projective structures $\rho$ always lifts.
	
	In this situation we can repeat the construction of the bundle but using $\widetilde{\rho}$ and $\CC^2$ as a fiber, thus obtaining a flat $\sl$-bundle $E\to M$, with flat linear connection $\nabla$, and holomorphic structure $\DDD_E:=\nabla^{0,1}$ 
	as a bundle on the underlying Riemann surface $X$. It comes
	with a holomorphic line subbundle $L$, corresponding to the section $s$ or, equivalently, to the branched developing map $d$. The branched transversality condition is precisely that $L$ be a non-horizontal subbundle, i.e., $TL \nsubseteq HE$  with $HE\subset TE$ the horizontal distribution determined by the flat connection on $E$, except at the branching points $p_k$ where the order of contact is giving by the branching order.

	Another way of expressing the transversality condition comes from looking
	at the
	composition $q\circ \nabla: \mathcal{O}{(L)} \to \mathcal{O}{(E/L\otimes K)}$,
	where $q:E\otimes K \to E/L \otimes K$ is the quotient map.
	This is $\mathcal{O}$-linear and thus defines a
	holomorphic map of line bundles $\beta_L:L\to E/L\otimes K$,
	called \emph{the second fundamental form}
	of $L$ in $E$. The subbundle is
	non-horizontal precisely if $\beta_L$ is non-zero, and
	the zeros of this map are the points $p_k$ where the structure
	branches. Note that, since $\det E = \mathcal{O}$ and $\rank(E)=2$,
	we have $E/L\cong L^{-1}$.
	Thus $\Hom(L,E/L\otimes K)\cong\Hom(L, L^{-1}  K)\cong L^{-2}  K$, and so we can see $\beta_L$ as a section of a line bundle, $\beta_L\in \holosecs{L^{-2}K}$. The branching divisor is precisely $\Div(\beta_L)$.
	
	In conclusion, every branched projective structure compatible with $X$, and whose representation lifts to $\sl$, gives rise to a flat $\sl$-vector bundle together with a non-horizontal holomorphic line subbundle $L$, i.e., one with non-zero second fundamental form $\beta_L$. Conversely, any such data will yield a branched projective structure.
	These objects are again only determined up to gauge equivalence, but now the ambiguity introduced by lifting the representation means that it is possible for different subbundles $L$ and $L'$ to determine the same projective structure. This happens if and only if $L'=L\otimes S $, for some flat line bundle $S$, since they projectivize to the same section of $\PP E_\rho\to X$.
	We collect these results in the following theorem due, in the unbranched case, to Gunning \cite[Theorem 2]{gunning:1967aa}.
	\begin{theorem}\label{oper}
		Let $E\to X$ be an $\sl$-vector bundle on a Riemann surface $X$ with holomorphic structure $\nabla^{(0,1)}$ given by a flat connection $\nabla$. Then any holomorphic line subbundle $L\subset E$ with non-zero holomorphic second fundamental form $\beta_L$, determines a branched projective structure compatible with $X$, with branching divisor $\Div(\beta_L)$. Moreover, any other $\sl$-bundle $E'$ with flat connection $\nabla'$ and subbundle $L'$ determines an equivalent projective structure if and only if there is an isomorphism $\Psi:E\otimes S \to E'$ such that $L'=\Psi(L\otimes S)$  and $\nabla\otimes \delta =\Psi^* (\nabla')$ for some holomorphic line bundle $S$ with flat connection $\delta$ of order 2.
	\end{theorem}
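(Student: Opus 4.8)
The plan is to organize the correspondence already spelled out in the preceding discussion into a clean two-way statement, handling first the construction of the structure from the bundle data and then the equivalence relation. For the construction direction, I would begin with the triple $(E,\nabla,L)$ and projectivize: the flat connection $\nabla$ induces a flat $\cp$-bundle $\PP E\to X$ with holonomy $\rho=\Proj\circ\widetilde{\rho}$, where $\widetilde{\rho}$ is the $\sl$-holonomy of $\nabla$, and the holomorphic line subbundle $L$ projectivizes to a section $s$ of $\PP E$. Pulling back to the universal cover $\widetilde{M}$, the flat bundle trivializes and $s$ becomes a $\rho$-equivariant map $d\colon\widetilde{M}\to\cp$. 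The key point is to identify the derivative of $d$ with the second fundamental form: $d$ fails to be a local biholomorphism exactly where the horizontal distribution becomes tangent to $s$, which by the computation of $\beta_L$ above is precisely the vanishing locus of $\beta_L$, and the order of contact there equals the order of vanishing of $\beta_L$. Hence $(\rho,d)$ is a branched projective structure with branching divisor $\Div(\beta_L)$. Compatibility with $X$ follows because $L$ is $\DDD_E$-holomorphic and $\DDD_E=\nabla^{0,1}$, so that $s$ is a holomorphic section and $d$ a local biholomorphism for the complex structure of $X$.

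For the equivalence direction, recall that two branched projective structures agree in $\mathscr{B}(X)$ precisely when their holonomy--developing pairs are related by simultaneous conjugation and post-composition by a single element of $\psl(2,\CC)$ (up to the isotopy-trivial gauge). I would translate this to the $\sl$-level. A choice of $\sl$-bundle $(E,\nabla)$ lifting a given projective holonomy $\rho=\Proj\circ\widetilde{\rho}$ is unique only up to a character $\pi_1(M)\to\{\pm I\}=Z(\sl)$, and such a character is the same datum as a flat line bundle $S$ with $\delta^{\otimes 2}$ trivial, i.e.\ a flat connection $\delta$ of order $2$. An isomorphism $\Psi\colon E\otimes S\to E'$ with $L'=\Psi(L\otimes S)$ and $\Psi^*\nabla'=\nabla\otimes\delta$ then induces an isomorphism of flat $\cp$-bundles carrying $s$ to $s'$, hence an equivalence of the associated projective structures; conversely, an equivalence of projective structures yields a projective-bundle isomorphism intertwining the sections, which lifts to such a $\Psi$ once the residual sign discrepancy between the two chosen lifts $\widetilde{\rho}$ and $\widetilde{\rho'}$ is absorbed into the order-$2$ line bundle $S$.

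The hard part will be the bookkeeping in this converse: one must verify that the $\psl(2,\CC)$-gauge freedom on projective structures lifts to an honest $\sl$-bundle isomorphism, and that the only obstruction to uniqueness of the lift is exactly the order-$2$ flat line bundle $S$, with no larger ambiguity. Concretely, the induced map $\PP E\to\PP E'$ lifts to a linear isomorphism of the $\sl$-bundles only up to the center $\{\pm I\}$, and the point requiring care is that this indeterminacy is parametrized precisely by flat line bundles $S$ of order $2$; equivalently, tensoring by $S$ preserves the $\sl$-structure since $\det(E\otimes S)=S^{\otimes 2}$ is trivial as a flat bundle. The forward assertions about transversality and the branching divisor are, by contrast, essentially a restatement of the identification of $\beta_L$ with $q\circ\nabla$ already carried out before the theorem.
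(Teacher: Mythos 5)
Your proposal is correct and follows essentially the same route as the paper, whose ``proof'' is really the discussion preceding the theorem: the dictionary between the triple $(E,\nabla,L)$ and the pair $(\rho,d)$ via projectivization, the identification of the branching divisor with $\Div(\beta_L)$ through the transversality of the section to the horizontal distribution, and the order-$2$ flat line bundle $S$ accounting exactly for the ambiguity in lifting $\rho$ from $\psl(2,\CC)$ to $\sl$. The paper presents the dictionary starting from the projective structure and you run it from the bundle data, but the content is identical, including the point you flag as the ``hard part.''
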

	
	\begin{remark}
		Note that, if $\Proj:\sl\to\psl(2,\CC)$ is the quotient map, the holonomy of the structure determined by $(E\to X,\nabla, L)$ is $\Proj\circ \hol(\nabla)$.
		Note also that, given the fact that the connections induced on $\det(E)$ and $\det(E')$ are trivial and $\det(E'){\cong}\det(E) \otimes \delta^{2}$ via $\Phi$ we conclude that the flat connection $\delta$ squares to the trivial connection on $S$. Such flat connections correspond to representations $\pi(M)\to\CC^*$ whose elements have order 2. Since there are only two such elements in the abelian group $\CC^*$, we conclude that there exist $2^{2g}$ such possible connections.
	\end{remark}
	
	This shows that a concept of branched projective structure compatible with $X$ is almost equivalent to that of a partial $\sl$-oper which we introduce next, following \cite{simpson:2010aa}.
	
	\subsection{Partial $\sl$-opers} \label{section partial oper}
	Classical opers were introduced in \cite{drinfeld:1985aa} and the concept was reformulated in modern language in \cite{beilinson:aa}. They are defined as a flat bundle together with a full filtration by holomorphic subbundles, whose induced map on quotients satisfies a transversality condition. In \cite{simpson:2010aa} the filtration is not necessarily full, and the transversality condition is replaced by the $gr$-semistability of the associated graded Higgs bundle, giving rise to the notion of \emph{partial oper}.
	\begin{definition} \cite{simpson:2010aa}
		Let $X$ be a Riemann surface and consider an $\sl$-bundle $E$ with flat connection $\nabla$ together with a filtration $$0\subset L \subset E$$ by a $\nabla^{(0,1)}$-holomorphic subbundle $L$. Let $\beta_L:L\to E/L\otimes K$ be the $\mathcal{O}$-linear map induced by $\nabla$.
		The \emph{associated graded} is the Higgs bundle $(\Gr(E),\theta)$ where $$ \Gr(E)=L\oplus E/L \cong L\oplus L^{-1} \hspace{5pt} \text{ and } \hspace{5pt} \theta= \begin{pmatrix}
			0 & 0\\
			\beta_L & 0
		\end{pmatrix}.$$
		If this Higgs bundle is semistable we call the filtration \emph{a partial $\sl$-oper}.
		Two partial opers are equivalent if their flat bundles are isomorphic by a gauge transformation that preserves the filtration.
	\end{definition}
	\begin{remark} \label{oper remark}
		Note that if $\deg(L)>0$ then $L$ is the maximal destabilizing subbundle of $\Gr(E)$. This means that $(\Gr(E),\theta)$ is (semi)stable as a Higgs bundle precisely if $L$ is not preserved by $\theta$, and this happens if and only if $\beta_L \not\equiv 0$. In this case, the definitions of branched projective structure compatible with $X$ and of partial oper are the same, by Theorem~\ref{oper}, and they just require $\beta_L$ to be non-zero.
		In the case $\deg(L)=0$ though, the bundle $\Gr(E)$ is semistable (as a holomorphic bundle).  Thus $(\Gr(E),\theta)$ is semistable as a Higgs bundle even if $\beta_L$ is zero. In this situation, the definition of partial oper includes more objects than the branched projective structures. These structures will correspond to the partial opers with non-zero $\beta_L$.
	\end{remark}
	\begin{remark}
		{We remark that Simpson's definition in \cite{simpson:2010aa} allows for filtrations which are not full. In particular, $0\subset E$, with $E$ semistable as a holomorphic bundle, is considered a partial oper structure in that paper, but not here. }
	\end{remark}
	\begin{remark}
		The case of classical, 
		or \emph{full}, 
		opers is obtained when $\beta_L:L\to L^{-1}K$ is an isomorphism. In this case $L^2\cong K$ and $\deg(L)=g-1$, and $(\Gr(E),\theta)$ is automatically stable, since $\beta_L\neq0$. These correspond to unbranched projective structures compatible with $X$.
	\end{remark}
	For now, we will be interested in the case of
	$\deg(L)>0$. Then, for a Riemann surface $X$, a
	branched projective structure compatible with $X$ is the same as a
	partial $\sl$-oper, with the caveat that equivalence of projective
	structures is slightly weaker than that of opers. While for opers the filtration must be preserved by gauge equivalence, in the
	situation of projective structures the gauge equivalence is allowed to
	twist the subbundle. In any case, regardless of degree, a partial
	$\sl$-oper on $X$, with non-zero $\beta_L$, determines a
	branched projective structure compatible with $X$.
	
	We also recall that the structure of a partial $\sl$-oper on $E$, in particular, realizes $E$ as an extension of the $\nabla^{(0,1)}$-holomorphic line bundles, namely of $L$ by $E/L\cong L^{-1}:$ 
	$$ 0 \to L \to E{\to} L^{-1} \to 0.$$
	These extensions are classified by an element of the cohomology group $\mathrm{H}^1(\Hom(L^{-1},L))\cong \mathrm{H}^1(L^2)$, and thus represented in Dolbeault cohomology by a $(0,1)$-form with values in $L^2$. In any $\cifty$ decomposition of $E$ of the form $E=L\oplus L^{-1}$, the holomorphic structure $\nabla^{(0,1)}$ can be written as $$\nabla^{(0,1)} = \begin{pmatrix}
		\DD_L & \omega\\
		0 & \DD_{L^{-1}}
	\end{pmatrix},$$ since $L$ is holomorphic, and the class $[\omega]\in \mathrm{H}^1(L^2)$ is called is the \emph{extension class} of the partial $\sl$-oper $(E,\nabla)$.
	
	\subsection{The conformal limit is a partial oper} \label{section conformal limit is oper}
	
	We now observe that the conformal limit calculated in Theorem~\ref{conflimit} defines a partial oper. 
	This was already proved in \cite{collier:2018aa} in greater generality. We include the proof for two reasons: firstly, because in the $\slr$-case we can give a particularly transparent proof using our explicit argument for the existence of the conformal limit and, secondly, because this sets the stage for the construction of branched projective structures in Section~\ref{section branched projective structures coming from the limit}.
	
	\begin{theorem}
		The $\hbar$-conformal limit $\nabla_{\hbar,0}$ of the $\slr$-Higgs bundle $E=L\oplus L^{-1}$, with $1\leq\deg(L)\leq g-1$,  and Higgs field $\Phi=\begin{psmallmatrix}
			0 & \alpha\\
			\beta &0
		\end{psmallmatrix}$, where $\alpha \in \holosecs{L^2K}$ and $0\neq \beta \in \holosecs{L^2K}$ is a partial $\sl$-oper
		\begin{equation*}
			0 \subset L \subset (E,\nabla_{\hbar,0})
		\end{equation*}
		with second fundamental form $\hbar^{-1} \beta$.
	\end{theorem}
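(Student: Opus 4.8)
The plan is to read everything off directly from the explicit coordinate expression \eqref{eq:conflimit} for $\nabla_{\hbar,0}$ furnished by Theorem~\ref{conflimit}, and then to appeal to Gunning's criterion (Theorem~\ref{oper}) together with Remark~\ref{oper remark}. First I would separate the connection form into its $(1,0)$- and $(0,1)$-parts. Since $h_0$ depends on $\conj{z}$, the diagonal entries $\pm\DD\log h_0$ and the off-diagonal terms $\hbar^{-1}\alpha$ and $\hbar^{-1}\beta$ are of type $(1,0)$, whereas $\hbar\conj{\beta}h_0^{-2}$ is of type $(0,1)$. Hence the holomorphic structure determined by $\nabla_{\hbar,0}$ is
$$\nabla_{\hbar,0}^{(0,1)} = \DDD_E + \begin{pmatrix} 0 & \hbar\conj{\beta}h_0^{-2} \\ 0 & 0 \end{pmatrix}.$$
Because the only $(0,1)$-correction sits in the upper-right corner, it sends the $E/L$ summand into $L$ and annihilates $L$; therefore $\nabla_{\hbar,0}^{(0,1)}$ preserves $L$ and restricts on it to its original holomorphic structure. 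This shows that $L\subset E$ is a holomorphic subbundle for $\nabla_{\hbar,0}^{(0,1)}$, and that the induced structures on $L$ and on $E/L\cong L^{-1}$ agree with the original ones, so in particular $\beta$ stays holomorphic.

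Next I would compute the second fundamental form $\beta_L$ following the recipe of Section~\ref{section Gunning}: apply $\nabla_{\hbar,0}$ to a local holomorphic section $(s,0)$ of $L$ and project to $E/L\otimes K$. Since $L$ is $\nabla_{\hbar,0}^{(0,1)}$-holomorphic, the $(0,1)$-part contributes nothing to the projection, so only the $(1,0)$-part matters; the lower-left entry $\hbar^{-1}\beta$ carries the $L$-summand into the $E/L$-summand, giving
$$\beta_L = \hbar^{-1}\beta \in \holosecs{L^{-2}K}.$$
As $\hbar\in\CC^*$ and $\beta\neq 0$, this second fundamental form is non-zero, with $\Div(\beta_L)=\Div(\beta)$.

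Finally I would invoke the stability criterion. The associated graded is $(\Gr(E),\theta)$ with $\Gr(E)=L\oplus L^{-1}$ and $\theta=\begin{psmallmatrix} 0 & 0 \\ \hbar^{-1}\beta & 0 \end{psmallmatrix}$. Since $\deg(L)>0$, Remark~\ref{oper remark} applies: $L$ is the maximal destabilizing subbundle, and $(\Gr(E),\theta)$ is stable precisely because $\theta$ does not preserve $L$, which holds as $\beta_L=\hbar^{-1}\beta\neq 0$. Thus the filtration $0\subset L\subset(E,\nabla_{\hbar,0})$ is a partial $\sl$-oper with second fundamental form $\hbar^{-1}\beta$, as claimed. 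The only point demanding care is the type decomposition of the entries of \eqref{eq:conflimit} — specifically recognizing that the $(0,1)$-correction to $\DDD_E$ is confined to the upper-right corner, so that the holomorphic type of $L$, and hence the holomorphicity of $\beta_L$, is untouched; once this is noted there is no genuine analytic obstacle, and the statement reduces to bookkeeping of the connection matrix.
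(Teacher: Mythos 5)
Your proposal is correct and follows essentially the same route as the paper's own proof: reading off from the explicit formula \eqref{eq:conflimit} that the $(0,1)$-part of $\nabla_{\hbar,0}$ is upper-triangular (hence preserves $L$) and that the second fundamental form is the lower-left entry $\hbar^{-1}\beta\neq 0$. Your additional explicit check of semistability of the associated graded via Remark~\ref{oper remark} is left implicit in the paper but is a correct and harmless elaboration.
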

	\begin{proof}
		The expression \eqref{eq:conflimit} shows that $\nabla_{\hbar,0}^{(0,1)}$ preserves $L$ which is therefore a holomorphic subbundle as required. Moreover, the second fundamental form is the lower left-hand corner of the matrix in \eqref{eq:conflimit} which is indeed the non-zero holomorphic section $\hbar^{-1} \beta$.
	\end{proof}
	\begin{remark}
		This is the same as saying that the conformal limit yields a branched projective structure compatible with $X$. Its branching divisor is precisely the divisor of $\beta$ in the Higgs field. 
	\end{remark}
	\begin{remark} \label{alpha zero}
		Observe also that in the case where $\alpha=0$, the Higgs field $\Phi$ lies in the nilpotent cone. In this situation, the entire family $\nabla_{\hbar, R}$ in (\ref{R family}) has an explicit form similar to $\nabla_{\hbar,0}$. So the proof of the theorem actually yields that $L$ is a holomorphic subbundle with non-zero second fundamental form. Thus every $\nabla_{\hbar, R}$, for $\alpha=0$, is a partial oper. We will see that there are more general conditions under which $\nabla_{\hbar, R}$ is a branched projective structure.
	\end{remark}
	\begin{remark}
		The extension class of the limit partial oper is represented by the upper right-hand corner of $\nabla_{\hbar, 0}^{0,1}$ which reads $[\hbar \conj{\beta} h_0^{-2}]\in \mathrm{H}^{0,1}(L^{2}) \cong \mathrm{H}^1(L^2)$.
	\end{remark}
	
	\begin{remark}
		\label{example hitchin component}
		Since the branching divisor of the projective structure given by the conformal limit is the divisor of zeros of $\beta$, we see that the $\hbar$-conformal limit $\nabla_{\hbar,0}$ of the $\sl$-Higgs bundle $(\DDD_E, \Phi)$ is a full oper if and only if $\beta$ is nowhere vanishing, i.e., if and only if $(\DDD_E, \Phi)$ lies in a Hitchin component.
	\end{remark}

	It is important to note that partial opers correspond only to branched projective structures which are compatible with a fixed Riemann surface structure $X$. To study what happens along the conformal limit, i.e., as $R\to 0$ in $\nabla_{\hbar,R}$, and to check that these connections actually correspond to branched projective structures we need to vary the structure $X$. To this purpose, we digress slightly on the theory of the Beltrami equation and Teichmüller space.

	\subsection{Beltrami differentials and complex structures} \label{section beltrami differentials}
	There are several approaches to the description of the Teichmüller space $\mathscr{T}(M)$ of a closed surface $M$. This is the space of marked complex structures on $M$ and it is classically identified with the unit $L^1$-ball in the space of holomorphic quadratic differentials $\holosecs{K^2}$ for some fixed Riemann surface structure via Teichmüller's embedding, see for example \cite[Theorem 2.9]{daskalopoulos:2007wy}. Its construction relies on the use of Beltrami differentials to change the complex structure. We follow the notation in \cite[IV.1.4]{lehto:1987aa}.
	
	\begin{definition}
		Let $X$ be Riemann surface with canonical bundle $K$. A \emph{Beltrami differential} $\mu$ is a (smooth) section of $\overline{K}\otimes K^{-1}$ whose $\sup$-norm is strictly bounded by 1, i.e., an element of the set
		\begin{equation}
			\beltrami (X)=\left\{ \mu \in \secs{\overline{K}\otimes K^{-1}}=\forms{(-1,1)}{X}\,\vert\, ||\mu||_\infty<1\right\}.
		\end{equation}
	\end{definition}
	
	\begin{remark}
		Note that the transformation law for the coordinates of $\mu$ is $\mu'(z')=\mu(z) \frac{dz'/dz}{d\conj{z}'/d\conj{z}}$. As such, the value of $|\mu(z)|$ is independent of the holomorphic coordinate chart and it is a well-defined quantity whose supremum we denote by  $||\mu||_\infty$.
	\end{remark}
	
	A Beltrami differential can be used to build a complex atlas for a different complex structure in the following way. Let $z$ be a coordinate for $X$ and let $\mu=\mu(z) d\conj{z} \otimes \frac{\DD}{\DD z}\in\secs{\overline{K}\otimes K^{-1}}$. Consider the local \emph{Beltrami equation} on a contractible open set $U\subset X$ for a function $v:U\to \CC$
	\begin{equation}
		\DD_{\conj{z}}v=\mu(z) \DD_z v.
	\end{equation}
	In this situation, $\mu(z)$ is called the \emph{parameter} of the equation.
	The classical existence result states that such a $v$ exists if $||\mu||_\infty<1$ on $U$. Furthermore, such function $v$ is a diffeomorphism onto some open set of $\CC$. Now, we can check the condition for existence on each open set $U$ of $X$, and this happens precisely if $\mu$ is a Beltrami differential. If we collect all such local functions $v$ together we can check that they form a complex atlas for a new complex structure denoted by $X_\mu$. A contemporary description of this result can be found for example in \cite[Theorem~4.8.12]{hubbard:2006ve}.
	
	\begin{theorem} \label{Xmu}
		Let $X$ be a Riemann Surface. Then any Beltrami differential $\mu \in \beltrami(X)$ determines a complex structure $X_\mu$ whose local charts are the solutions of the Beltrami equation with parameter $\mu(z)$. 
	\end{theorem}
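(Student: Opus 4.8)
The plan is to realize $X_\mu$ by patching together the locally defined solutions of the Beltrami equation into a holomorphic atlas on the smooth surface $M$ underlying $X$. First I would fix a coordinate cover $\{(U_\alpha, z_\alpha)\}$ of $X$, on which $\mu$ is represented by functions $\mu_\alpha(z_\alpha)$ with $|\mu_\alpha| \leq \|\mu\|_\infty < 1$, and invoke the classical existence result cited just before the statement to obtain, on each (contractible) $U_\alpha$, a diffeomorphism $v_\alpha$ onto an open subset of $\CC$ solving $\DD_{\conj{z}_\alpha} v_\alpha = \mu_\alpha\, \DD_{z_\alpha} v_\alpha$. Since each $v_\alpha$ is a diffeomorphism, the collection $\{(U_\alpha, v_\alpha)\}$ is automatically a smooth atlas; the entire content of the theorem is thus reduced to showing that on every overlap the transition map $g := v_\beta \circ v_\alpha^{-1}$ is holomorphic, for then $\{(U_\alpha, v_\alpha)\}$ is a complex atlas and defines the desired new complex structure $X_\mu$.

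The key step is to observe that the Beltrami equation is coordinate independent, which is exactly what the transformation law $\mu'(z') = \mu(z)\, \frac{dz'/dz}{d\conj{z}'/d\conj{z}}$ from the preceding remark encodes. Indeed, for any function $v$ one has the intrinsic forms $\DDD v = \DD_{\conj z} v\, d\conj z \in \forms{0,1}{U}$ and $\DD v := \DD_z v\, dz \in \forms{1,0}{U}$, while $\mu$ is a section of $\conj{K} \otimes K^{-1} = \Hom(K, \conj{K})$; applying $\mu$ to $\DD v$ gives a well-defined element $\mu(\DD v) \in \forms{0,1}{U}$ which equals $\mu(z)\,\DD_z v\, d\conj z$ in every chart. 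Hence all the local equations are expressions of the single global condition $\DDD v = \mu(\DD v)$, and in particular, on $U_\alpha \cap U_\beta$, the restriction of $v_\beta$ read in the coordinate $z_\alpha$ satisfies the same equation as $v_\alpha$, namely with Beltrami coefficient $\mu_\alpha$.

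It then remains to deduce holomorphy of $g$ from the fact that $v_\alpha$ and $v_\beta$ share the Beltrami coefficient $\mu_\alpha$ in the common coordinate $w := z_\alpha$. I would do this by a direct chain-rule computation: writing $v_\beta = g(v_\alpha, \conj{v_\alpha})$ and substituting $\DD_{\conj z} v_\bullet = \mu_\alpha\, \DD_z v_\bullet$ for both $\bullet = \alpha, \beta$ leads to the identity $(\DD_{\conj{w}} g)\,(1 - |\mu_\alpha|^2)\,\overline{\DD_{z_\alpha} v_\alpha} = 0$. Because $\|\mu\|_\infty < 1$ and $v_\alpha$ is an (orientation-preserving) diffeomorphism — so that $\overline{\DD_{z_\alpha} v_\alpha}$ is nowhere zero — this forces $\DD_{\conj w} g = 0$, i.e.\ $g$ is holomorphic, completing the argument. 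I expect the genuinely hard analysis — existence and diffeomorphism regularity of the local $v_\alpha$ — to be entirely imported from the cited theorem, so the real work here is the gluing, and its one subtle input is precisely the hypothesis $\|\mu\|_\infty < 1$, which both guarantees solvability and keeps the Jacobian positive. As an alternative packaging of the same idea, one could note that $\mu$ determines an almost complex structure on $M$ whose $(1,0)$-forms are spanned by $dz + \mu\, d\conj z$ (well defined by the same transformation law); this is automatically integrable in complex dimension one, and the $v_\alpha$ are exactly its local integrating charts.
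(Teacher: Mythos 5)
Your proposal is correct and follows the same route the paper takes: the paper does not actually prove this statement but only sketches it (local existence of diffeomorphic solutions of the Beltrami equation on a contractible cover, then assembling these into a complex atlas) and delegates the details to the cited Theorem 4.8.12 of Hubbard. Your write-up supplies precisely the step the paper leaves to the reference --- the coordinate-invariance of the equation plus the chain-rule identity $g_{\conj v}\,(1-|\mu_\alpha|^2)\,\conj{\DD_{z_\alpha}v_\alpha}=0$ forcing the transition maps to be holomorphic, using $\|\mu\|_\infty<1$ and the nonvanishing of $\DD_{z_\alpha}v_\alpha$ --- and that computation is correct (the only quibble being that the quantity you write as $\DD_{\conj w}g$ is really the derivative of $g$ with respect to the conjugate of its \emph{target} variable $\conj{v_\alpha}$, not of $w=z_\alpha$).
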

	
	In conclusion, if $X$ has coordinate $z$, then $X_\mu$ has coordinate $v$ such that $\DD_{\conj{z}}v = \mu(z) \DD_z v$. Writing $\nu=\DD_z v$ we can observe that the bases of $(1,0)$ and $(0,1)$-forms of $X_\mu$ are given with respect to $X$ by
	\begin{align} 
		\label{coordinates new} dv&= \nu(dz + \mu (z)d\conj{z})\\
		\label{coordinates new2} d\conj{v}&= \conj{\nu} (d\conj{z} + \conj{\mu}(z) dz),
	\end{align}
	with $\nu\neq0$ since $v$ is a local diffeomorphism.
	
	\begin{remark}
		It is actually true that any complex structure up to biholomorphism isotopic to the identity arises in this way. (This can be deduced, for example, from the surjectivity statement of \cite[Theorem 2.9]{daskalopoulos:2007wy}, together with the fact that the map there factors through a map out of $\beltrami(X)$). In particular, there is an identification of $\mathscr{T}(M)$ with the space of Beltrami differentials $\beltrami(X)$ modulo the equivalence relation where $\mu\sim\mu'$ if there is a biholomorphism $X_\mu \to X_{\mu'}$ isotopic to the identity.
	\end{remark}
	
	\section{Branched projective structures coming from the conformal limit}
	\label{section branched projective structures coming from the limit}
	We are now ready to carry out the main construction of branched projective structures coming from the conformal limit.
	We will define an appropriate Riemann surface structure $X_\mu$ for which $\nabla_{\hbar,R}$ is a partial oper, i.e., it defines a branched $\cp$-structure compatible with $X_\mu$. This construction will carry through provided the Higgs field
	$\Phi=\begin{psmallmatrix}
		0 & \alpha\\
		\beta &0
	\end{psmallmatrix}$ lies in the special locus where the divisors of zeros satisfy $\Div(\alpha)\geq \Div(\beta)$ and given also that $|\hbar^2 R^2|\leq1$. In this section, we shall assume that $\deg(L)> 0$. The case $\deg(L)=0$ will be treated in Section~\ref{degree zero section}.

	\begin{theorem} \label{main theorem}
		Fix $R\in\RR^+_0$ and $\hbar \in \CC^*$ such that $|\hbar^2 R^2|\leq1$. Let $L$ be a line bundle of degree $1\leq\deg(L)\leq g-1$ and $h_R$ be a solution of the $R$-scaled vortex equation for $(\alpha,\beta)$, where $\alpha \in \holosecs{L^2K}$ and $0\neq \beta \in \holosecs{L^2K}$. Assume further that $\Div(\alpha)\geq \Div(\beta)$ and let $\mu=\hbar^2 R^2 \frac{\conj{\alpha}}{\beta} h_R^2$. Then $\mu$ is a Beltrami differential. Further  $\nabla_{\hbar, R}$ is a partial oper for $X_\mu$, with branching divisor $\Div(\beta)$. In particular, it determines a branched projective structure compatible with $X_\mu$.
	\end{theorem}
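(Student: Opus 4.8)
The plan is to prove the three assertions in turn: that $\mu\in\beltrami(X)$, that $L$ is a holomorphic subbundle of $(E,\nabla_{\hbar,R})$ for the complex structure $X_\mu$ carrying the advertised second fundamental form, and finally to invoke Gunning's criterion (Theorem~\ref{oper}). The formula for $\mu$ is reverse-engineered so that the second step becomes an identity, so I expect the real work to lie in the first step, and specifically in the sup-norm bound.

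First I would check that $\mu=\hbar^2R^2\,\conj\alpha\,h_R^2/\beta$ is a well-defined smooth section of $\conj K\otimes K^{-1}$. A weight count does this: $\conj\alpha\in\secs{\conj{L^2K}}$, dividing by $\beta$ contributes $L^2K^{-1}$, and $h_R^2$ is a metric on $L^2$, so the $L$- and $K$-weights combine to leave a section of $\conj K\otimes K^{-1}$, the correct bundle for a Beltrami differential. The only possible poles sit over $\Div(\beta)$, and the hypothesis $\Div(\alpha)\geq\Div(\beta)$ makes $\conj\alpha/\beta$ bounded there, so $\mu$ is smooth.

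The crux is the bound $||\mu||_\infty<1$. Since $|\mu|^2=|\hbar^2R^2|^2 f$ with $f:=|\alpha|^2h_R^4/|\beta|^2=(|\alpha|^2h_R^2)/(|\beta|^2h_R^{-2})$, and $|\hbar^2R^2|\leq1$, it suffices to prove $\max_X f<1$. Setting $s:=\log f$, the vortex equation (Definition~\ref{vortex eqs}) together with the harmonicity of $\log|\alpha|^2$ and $\log|\beta|^2$ away from their zeros gives $\DDDz\DDz s=4R^2(|\alpha|^2h_R^2-|\beta|^2h_R^{-2})=4R^2|\beta|^2h_R^{-2}(f-1)$. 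At an interior maximum of $s$ the left side is $\leq 0$, forcing $f\leq1$ there and hence everywhere; the zeros of $\beta$ are removable by $\Div(\alpha)\geq\Div(\beta)$. To upgrade this to a strict bound I would write the equation as $\Delta s=4c\,s$ with $c:=4R^2|\beta|^2h_R^{-2}(e^s-1)/s\geq0$ and apply the strong maximum principle: if $\max s=0$ were attained at an interior point, then, the zeroth-order coefficient being $\leq0$ and the maximum being $\geq0$, $s$ would be forced constant, i.e. $f\equiv1$. But integrating the vortex equation—whose two sides are the coefficients of a globally defined $(1,1)$-form—shows $\int_X(|\alpha|^2h_R^2-|\beta|^2h_R^{-2})$ to be a nonzero multiple of $\deg L$, which is positive here, so $f\not\equiv1$, a contradiction. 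Hence $\max_X f<1$ and $||\mu||_\infty^2=|\hbar^2R^2|^2\max_X f<1$, and by Theorem~\ref{Xmu} the differential $\mu$ determines $X_\mu$. This strictness argument, together with the bookkeeping at the common zeros of $\alpha$ and $\beta$, is where I expect the main difficulty.

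It remains to show $\nabla_{\hbar,R}$ is a partial oper for $X_\mu$; it is flat by construction, cf.~\eqref{R family}. Write the lower-left entry of its connection matrix as $c=\hbar^{-1}\beta+\hbar R^2\conj\alpha h_R^2=P\,dz+Q\,d\conj z$, where $P=\hbar^{-1}\beta(z)$ and $Q=\hbar R^2\conj{\alpha(z)}h_R^2$. Using the $X_\mu$-coframe $dv=\nu(dz+\mu\,d\conj z)$ of \eqref{coordinates new}, the $(0,1)_{X_\mu}$-component of $c$ vanishes precisely when $c\wedge dv=0$, i.e. when $\mu=Q/P$, which is exactly the chosen $\mu$. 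Therefore $\nabla_{\hbar,R}^{(0,1)_{X_\mu}}$ is upper triangular with respect to $E=L\oplus L^{-1}$, so $L$ is a $\nabla^{(0,1)_{X_\mu}}$-holomorphic subbundle. Being now of type $(1,0)_{X_\mu}$, the same entry reads $c=(P/\nu)\,dv=\hbar^{-1}(\beta(z)/\nu)\,dv$ and represents the second fundamental form $\beta_L\in\holosecs{L^{-2}K_{X_\mu}}$; it is nonzero since $\beta\neq0$, and because $\nu$ is nowhere zero and $v$ is a local diffeomorphism one gets $\Div(\beta_L)=\Div(\beta)$. As $\deg L>0$, Theorem~\ref{oper} and Remark~\ref{oper remark} then identify $0\subset L\subset(E,\nabla_{\hbar,R})$ as a partial $\sl$-oper on $X_\mu$ with branching divisor $\Div(\beta)$, hence as a branched projective structure compatible with $X_\mu$.
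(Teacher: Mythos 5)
Your proposal is correct and follows essentially the same route as the paper: the weight count for $\mu$, the maximum principle applied to $u=\log\left|\tfrac{\alpha}{\beta}h_R^2\right|^2$ via the vortex equation (the paper writes the nonlinearity as $\sinh(u/2)$ and excludes the constant case by noting that $\alpha/\beta\in\holosecs{L^4}$ must vanish somewhere because $\deg L>0$, where you instead integrate the vortex equation to recover $\deg L$ --- both work), and the identification of the $(0,1)_\mu$-part of the lower-left entry of $\nabla_{\hbar,R}$ to exhibit $L$ as holomorphic with second fundamental form $\hbar^{-1}\beta/\nu$ and branching divisor $\Div(\beta)$. The only caveat is that your preliminary weak-maximum-principle sentence is inconclusive at a common zero of $\alpha$ and $\beta$ of equal order (there the factor $|\beta|^2h_R^{-2}$ vanishes, so $\Delta s\leq 0$ gives no bound on $f$), but this is harmless since your subsequent strong-maximum-principle argument, like the paper's, already covers that case.
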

	\begin{remark}
		The case $\Div(\alpha)=\Div(\beta)$ implies that $\deg(L^2K)=\deg(L^{-2}K)$ and thus $\deg(L)=0$, which is excluded by the hypothesis on $L$.
		Moreover, if $\alpha=0$, we consider that the condition $\Div(\alpha)\geq \Div(\beta)$ holds and then $\mu=0$. In this case, everything is compatible with the base Riemann surface structure $X$ since $\nabla_{\hbar,R}$ is a partial oper (cf.\ Remark~\ref{alpha zero}).
	\end{remark}
	
	\begin{proof}
		For $\alpha=0$ there is nothing to prove in view of the preceding remark.  So we treat the case of non-zero $\alpha$.
		Note that $\mu=\hbar^2 R^2 \frac{\conj{\alpha}}{\beta} h_R^2$ is a smooth section of
		\begin{displaymath}
			\conj{L^2 K}\otimes(L^{-2}K)^{-1}\otimes(L^{-2}\conj{L}^{-2})
			\cong \conj{K}\otimes K^{-1}.
		\end{displaymath}
		Thus only $||\mu||_\infty$<1 needs to be checked to prove that $\mu$ is a Beltrami differential. This will be done in Lemma~\ref{lem:sup-mu-proof} below.  Now we can define a complex structure $X_\mu$ by Theorem~\ref{Xmu} whose coordinates are given by solutions $v$ of the Beltrami equation for $\mu$. Using equation (\ref{coordinates new}) one knows that $dv= \nu (dz + \mu(z) d\conj{z})$, with $\nu=\DD_zv\neq 0$. Thus, writing $\alpha=\alpha(z) dz$ and $\beta = \beta(z) dz$, the flat connection $\nabla_{\hbar, R}$ can be written as in (\ref{R family}),
		\begin{equation} \label{mu form of connection}
			\nabla_{\hbar, R}-d=
			\begin{pmatrix}
				* & *\\
				\hbar^{-1} \beta(z) \left(dz + \hbar^2 R^2 \frac{\conj{\alpha}(z)}{\beta(z)} h_R^2(z) d\conj{z}\right) & *
			\end{pmatrix}
			= \begin{pmatrix}
				* & *\\
				\hbar^{-1} \frac{\beta(z)}{\nu} dv & *
			\end{pmatrix}.
		\end{equation}
		This shows in particular that the holomorphic structure
		$\nabla_{\hbar, R}^{(0,1)_\mu}$ on $E$ (as a bundle on $X_\mu$) preserves $L$, since
		\begin{equation*}
			\nabla_{\hbar, R}^{(0,1)_\mu}-\DDD^{\mu}= 	\begin{pmatrix}
				* & *\\
				\hbar^{-1} \frac{\beta(z)}{\nu} dv & *
			\end{pmatrix}^{(0,1)_\mu}=\begin{pmatrix}
				* & *\\
				0 & *
			\end{pmatrix}.
		\end{equation*}
		where $\DDD^{\mu}$ is the $\DDD$ operator of $X_{\mu}$.
		
		This means that $L_\mu\subset E_\mu$ is in fact a
		holomorphic subbundle, where the subscript $\mu$ indicates that we
		are considering holomorphic bundles on $X_\mu$ with the holomorphic
		structure 
		induced by the flat connection $\nabla_{\hbar, R}$.
		
		It remains to show that the $X_\mu$-holomorphic second fundamental
		form $\beta^{\mu}_L$ is non-zero. To this effect, we note that
		$\beta_L^\mu$ is the $\mathcal{O}_{X_\mu}$-localization of
		$q\circ \nabla_{\hbar, R} :\mathcal{O}_{X_\mu}(L_\mu)\to
		\mathcal{O}_{X_\mu} (E_\mu/L_\mu \otimes K_\mu)$, where $K_\mu$ is the
		canonical bundle of $X_\mu$.
		But,
		observing the form of $\nabla_{\hbar,R}$ in (\ref{mu form of
			connection}), this is simply locally given by multiplication by
		$\hbar^{-1}\frac{\beta(z)}{\nu}$, which is non-zero. Further, the
		order of vanishing at each point is precisely the one of $\beta$, thus
		implying that the branching divisor is $\Div(\beta)$.
	\end{proof}
	
	\begin{lemma}
		\label{lem:sup-mu-proof}
		Let $|\hbar^2 R^2|\leq1$ then $|\mu(z)|^2=\left|\hbar^2 R^2 \frac{\conj{\alpha}(z)}{\beta(z)} h_R^2(z) \right|^2<1$ everywhere on $X$.
	\end{lemma}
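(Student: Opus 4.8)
The plan is to collapse the whole statement onto a single real function and then run a maximum-principle argument powered by the vortex equation. Write $\sigma=\alpha/\beta$: because $\Div(\alpha)\geq\Div(\beta)$ this is a genuine \emph{holomorphic} section of $L^4$, so that
\[
s := \abs{\tfrac{\conj{\alpha}}{\beta}h_R^2}^2 = \frac{\abs{\alpha}^2}{\abs{\beta}^2}h_R^4 = \abs{\sigma}_{h_R^4}^2
\]
is a globally defined, smooth, non-negative function on $X$ (here $h_R>0$ so $\abs{h_R^2}^2=h_R^4$), vanishing exactly on $\sigma^{-1}(0)$, which is non-empty since $\deg(L^4)=4\deg(L)>0$ and $\sigma\neq0$. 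As $\abs{\mu}^2=\abs{\hbar^2R^2}^2\,s\leq s$ under the hypothesis $\abs{\hbar^2R^2}\leq1$, the Lemma follows at once from the sharper claim $s<1$ everywhere; this absorbs the interior and boundary cases of the hypothesis simultaneously. (If $\alpha=0$ then $\mu=0$ and there is nothing to prove, so I assume $\alpha\neq0$.)

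The engine is Definition~\ref{vortex eqs}. On the open, connected set $V=X\setminus\sigma^{-1}(0)$ the function $\log s=\log\abs{\sigma}^2+4\log h_R$ is smooth, and since $\log\abs{\sigma}^2$ is harmonic there,
\[
\DDDz\DDz\log s = 4\,\DDDz\DDz\log h_R = 4R^2\bigl(\abs{\alpha}^2h_R^2-\abs{\beta}^2h_R^{-2}\bigr)=4R^2\,\abs{\beta}^2h_R^{-2}\,(s-1).
\]
The crucial structural fact is that $\DDDz\DDz\log s$ has the \emph{same sign as} $s-1$.

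From this I first get $s\leq1$. If $\max_X s>1$, then on the non-empty open set $U=\{s>1\}\subset V$ the identity makes $\log s$ subharmonic; an interior maximum forces $\log s$ to be constant on its connected component $U_0$, whence $\abs{\beta}^2h_R^{-2}(s-1)\equiv0$ on $U_0$ and, as $s>1$ there, $\beta\equiv0$ on an open set --- impossible for a non-zero holomorphic section. The point I expect to be the main obstacle is exactly this degeneracy: the maximiser of $s$ may sit at a \emph{common} zero of $\alpha$ and $\beta$ (a point of $\Div(\beta)$ lying in $V$), where the pointwise test $\DDDz\DDz\log s\leq0$ collapses to $0\leq0$ and gives no information. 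Working on the superlevel set $U$ rather than evaluating at the maximiser sidesteps this cleanly.

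For the strict inequality, suppose $\max_X s=1$. On $V$ rewrite the identity as $\DDDz\DDz\log s=c\,\log s$ with $c:=4R^2\abs{\beta}^2h_R^{-2}\,\tfrac{s-1}{\log s}$, which is continuous and $\geq0$ (since $\tfrac{e^t-1}{t}>0$, and $\abs{\beta}^2$ makes $c$ vanish, hence stay finite, at the common zeros). As $\log s\leq0$ attains the interior maximum $0$, the strong maximum principle (Hopf) for $\DDDz\DDz(\,\cdot\,)-c\,(\,\cdot\,)$ with $c\geq0$ forces $\log s\equiv0$ on the connected set $V$; thus $s\equiv1$ on $V$ and, by continuity, on $X$, contradicting $s=0$ on $\sigma^{-1}(0)$. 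Hence $s<1$ and $\abs{\mu}^2\leq s<1$ everywhere. This strict step is the only place the argument must go beyond the one-line bound $s\leq1$, and it is precisely what is needed to cover the boundary regime $\abs{\hbar^2R^2}=1$.
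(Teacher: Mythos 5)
Your proof is correct and follows essentially the same route as the paper: both set $u=\log s$ and use the vortex equation to produce the identity $\DDDz\DDz u = 4R^2|\beta|^2h_R^{-2}(e^u-1)$ (the paper writes it in $\sinh$-Gordon form), then linearize by dividing by $u$ to get a non-negative zeroth-order coefficient and invoke the strong maximum principle, with the zeros of $\alpha/\beta$ supplying the points where $s<1$ that rule out the constant case. The only difference is organizational --- the paper excises small disks around $(\alpha/\beta)^{-1}(0)$ and applies the maximum principle once on the resulting manifold with boundary, whereas you split into a subharmonicity step on $\{s>1\}$ and a Hopf step on $X\setminus(\alpha/\beta)^{-1}(0)$ --- but the substance is identical.
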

	
	\begin{proof} \allowdisplaybreaks
		We observe that, as $\Div(\alpha)\geq\Div(\beta)$, $\mu$ is smooth. We consider the function $$u(z)=\log \frac{|\mu|^2}{|\hbar^2 R^2|^2}=\log \left|\frac{\alpha(z)}{\beta(z)}h_R^2(z) \right|^2. $$
		This is simply the logarithm of the norm squared of the section $\frac{\alpha}{\beta} \in \holosecs{L^4}$, where $L$ is given the metric $h_R$ which is a solution of the vortex equations (\ref{vortex eqs}). 
		We will show that $u<0$ everywhere on $M$, thus implying $|\mu(z)|^2<|\hbar^2 R^2|^2$. As, by hypothesis $|\hbar^2 R^2|\leq 1$, the conclusion that $|\mu(z)|^2<1$ everywhere follows. To achieve the inequality $u<0$ we use the maximum principle for elliptic operators (as in Hitchin \cite[proof of Theorem (11.2)]{hitchin:1987vg}, following Li \cite[Claim 6.1]{li:2019aa}), which we set up as follows.
		
		Consider the set 
		$\{z_1, z_2,\cdots, z_k\}$ of zeros of $\frac{\alpha}{\beta}$, which
		is non-empty, because $\Div(\alpha)$ is not everywhere equal to
		$\Div(\beta)$. In this set, the function $u$ has negative singularities, i.e., points where $\lim_{z\to z_j} u(z)=-\infty$. This means we can consider small closed disks around these points where $u(z)$ is as negative as we want. In particular, we can get disks where $u<0$ (strict inequality). When we remove these disks from the surface $M$, we get a manifold with boundary $U$. The function $u$ is smooth on the interior of $U$ and continuous up to the boundary $\DD U$, where $u<0$. These are part of the conditions to apply the maximum principle. Note that we only need to show that $u<0$ on the interior of $U$, since in $M\backslash U$ the inequality already holds (possibly with $u=-\infty$).
		
		Suppressing from the notation the dependence on $z$, we see that $e^u=h_R^4|\frac{\alpha}{\beta}|^2$, and thus $e^{u/2}=h_R^2|\frac{\alpha}{\beta}|.$ 
		So, writing $u= \log \frac{\alpha \conj{\alpha}}{\beta \conj{\beta}} h_R^4$, we can calculate that, away from the zeros of $\alpha$ (in particular in $U$),
		\begin{align*} 
			\DDDz \DDz u &= \DDDz \DDz \log \frac{\alpha}{\beta} +\DDDz \DDz \log \frac{\conj{\alpha}}{\conj{\beta}}+ 4 \DDDz \DDz \log h_R\\
			&= 0 + 0 + 4 \DDDz \DDz \log h_R \qquad \qquad  \text{ (because } \tfrac{\alpha}{\beta} \text{ is holomorphic)}\\
			&=4 R^{2}\left(|\alpha|^2h_R^2 - |\beta|^2 h_R^{-2}\right) \qquad \qquad  \text{ (by the vortex equation (\ref{vortex eqs}}))\\
			&= 4 R^{2}|\alpha \beta| \, \left(\frac{|\alpha|}{|\beta|}h_R^2 - \frac{|\beta|}{|\alpha|} h_R^{-2}\right)\\
			&= 4 R^{2}|\alpha \beta| \left(e^{u/2} - e^{-u/2}\right) = 8 R^{2}|\alpha \beta| \sinh(u/2).
		\end{align*}
		Recalling that the Laplacian of $X$ is $\Delta=\frac{4}{g_0} \DDDz \DDz $, where $g_0$ is the metric of constant negative curvature $-4$ (cf.\ Example~\ref{example hitchin}), we have equivalently \begin{equation*}
			\Delta u = 32 R^2 \frac{|\alpha \beta |}{g_0} \sinh(u/2).
		\end{equation*}
		This is a $\sinh$-Gordon type equation, which can be written as
		\begin{equation*}
			L[u]= \Delta u - 32 R^2 \frac{|\alpha \beta |}{g_0} \frac{\sinh(u/2)}{u} \, u =0,
		\end{equation*}
		since $\lim_{u\to 0} \frac{\sinh(u/2)}{u}=\frac{1}{2} $ is finite. Here $L=\Delta - c$ is a linear differential operator, where $c=32 R^2 \frac{|\alpha \beta |}{g_0} \frac{\sinh(u/2)}{u}$. In particular, $c\geq0$, since $\sinh(u/2)$ and $u$ have the same sign. This implies that $u$ is a solution of a linear partial differential equation which is uniformly elliptic on $U$, and has $c\geq0$. Since $u\leq 0$ on the boundary $\DD U$, we are thus in the conditions of the classical maximum principle \cite[Theorem 3.5]{gilbarg:2001aa} which then implies that either $u$ is constant or it cannot attain a non-negative maximum in the interior of $U$. As $u$ cannot be constant (since $\alpha$ has zeros but it is non-zero) we conclude that $u<0$ in the interior of $U$, which finishes the proof.
	\end{proof}
	We can also calculate the extension class of this partial $\sl$-oper $\nabla_{\hbar, R}$.
	
	\begin{proposition}
		The extension class of the partial $\sl$-oper $(E,\nabla_{\hbar,R})$ over $X_\mu$, with $\mu=\hbar^2 R^2 \frac{\conj{\alpha}}{\beta} h_R^2$, is the class in $\mathrm{H}^1(X_\mu,L_\mu^2)$ represented in Dolbeault cohomology by\begin{equation*}
			\omega = \left( \frac{1-|\mu|^2/|\hbar^2 R^2|^2}{1-|\mu|^2} \right)\hbar R^2 \conj{\beta}(z) h_R^{-2} \frac{d \conj{v}}{\conj{\nu}}.
		\end{equation*}
	\end{proposition}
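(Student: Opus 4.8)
The plan is to read the extension class straight off the connection matrix, the only real subtlety being that the Dolbeault representative must be taken with respect to the complex structure $X_\mu$ rather than the original $X$. Recall from Section~\ref{section partial oper} that, in the fixed $\cifty$ splitting $E=L\oplus L^{-1}$, the extension class of the partial oper is represented by the upper right-hand entry $\omega$ of $\nabla_{\hbar,R}^{(0,1)_\mu}$, that is, by the $(0,1)_\mu$ part of the off-diagonal term of the connection; the lower left-hand entry is already known to vanish in type $(0,1)_\mu$ by the proof of Theorem~\ref{main theorem}. From the explicit form (\ref{R family}) this off-diagonal $1$-form is
\begin{equation*}
	\eta = \hbar^{-1}\alpha(z)\,dz + \hbar R^2 \conj{\beta}(z) h_R^{-2}\,d\conj{z},
\end{equation*}
so the computation reduces to extracting its $d\conj{v}$-component.

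To do this I would invert the coordinate relations (\ref{coordinates new})--(\ref{coordinates new2}). Solving the linear system for $dz$ and $d\conj{z}$ in terms of the basis $dv, d\conj{v}$ and keeping only the antiholomorphic parts gives
\begin{equation*}
	(dz)^{(0,1)_\mu} = -\frac{\mu}{1-\abs{\mu}^2}\,\frac{d\conj{v}}{\conj{\nu}},\qquad
	(d\conj{z})^{(0,1)_\mu} = \frac{1}{1-\abs{\mu}^2}\,\frac{d\conj{v}}{\conj{\nu}},
\end{equation*}
the common denominator $1-\abs{\mu}^2$ being the determinant of the change of basis, which is nonzero precisely because $\mu$ is a Beltrami differential (Lemma~\ref{lem:sup-mu-proof}). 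Substituting into $\eta$ and factoring out $\frac{1}{1-\abs{\mu}^2}\frac{d\conj{v}}{\conj{\nu}}$ presents $\omega$ as this factor times the scalar
\begin{equation*}
	-\hbar^{-1}\alpha(z)\,\mu + \hbar R^2 \conj{\beta}(z) h_R^{-2}.
\end{equation*}

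The last step is an algebraic simplification using the explicit form $\mu = \hbar^2 R^2 \frac{\conj{\alpha}}{\beta} h_R^2$. Plugging this in turns the first summand into $-\hbar R^2 \frac{\abs{\alpha}^2}{\beta} h_R^2$, and factoring out $\hbar R^2\conj{\beta} h_R^{-2}$ leaves the factor $1 - \frac{\abs{\alpha}^2}{\abs{\beta}^2}h_R^4$. Here I would invoke the identity already recorded in the proof of Lemma~\ref{lem:sup-mu-proof}, namely $\frac{\abs{\alpha}^2}{\abs{\beta}^2}h_R^4 = \abs{\mu}^2/\abs{\hbar^2 R^2}^2$, which rewrites the factor as $1 - \abs{\mu}^2/\abs{\hbar^2 R^2}^2$ and produces exactly the stated representative. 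I do not expect any genuine obstacle: the entire content is the bookkeeping of the real-linear coordinate change defining $X_\mu$ together with the conjugate line-bundle types, the one point requiring care being to retain only the terms of type $(0,1)_\mu$ and not those of type $(0,1)$ for the original structure $X$.
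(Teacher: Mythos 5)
Your proposal is correct and follows essentially the same route as the paper: extract the $(0,1)_\mu$ part of the upper right entry of $\nabla_{\hbar,R}$ by inverting the coordinate relations \eqref{coordinates new}--\eqref{coordinates new2}, then simplify using the definition of $\mu$. The only (immaterial) difference is in the final algebra, where you substitute $\mu$ explicitly and use $\tfrac{\abs{\alpha}^2}{\abs{\beta}^2}h_R^4=\abs{\mu}^2/\abs{\hbar^2R^2}^2$, whereas the paper rewrites the same quantity directly as $\mu\conj{\mu}/\abs{\hbar^2R^2}^2$.
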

	\begin{proof}\allowdisplaybreaks
		We need to calculate the $(0,1)_\mu$-part of the upper right entry of $\nabla_{\hbar, R}$. Comparing with (\ref{R family}), this is $\omega= (\hbar^{-1}\alpha(z) dz + \hbar R^2 \conj{\beta}(z) h_R^{-2} d\conj{z})^{(0,1)_\mu}$. To calculate, we note that we can invert equations (\ref{coordinates new}) and (\ref{coordinates new2}) to get\vspace{-10pt}
		\begin{align*}
			dz&= \frac{1}{1-|\mu|^2}\left(\frac{d v}{\nu} - \mu \frac{d \conj{v}}{\conj{\nu}}  \right)\\
			d\conj{z}&=\frac{1}{1-|\mu|^2}\left( -\conj{\mu} \frac{d v}{\nu}  +   \frac{d \conj{v}}{\conj{\nu}}\right).
		\end{align*}
		This means $\omega$ has a term coming from $\alpha$ which is $\hbar^{-1}\alpha(z) \frac{- \mu}{1-|\mu|^2} \frac{d \conj{v}}{\conj{\nu}} $ and another one coming from $\beta$ which is $\hbar R^2 \conj{\beta}(z) h_R^{-2} \frac{1}{1-|\mu|^2} \frac{d \conj{v}}{\conj{\nu}}$. This implies \vspace{-9pt}
		\begin{align*}
			\omega&= \left(\hbar R^2 \conj{\beta}(z) h_R^{-2} -\mu\hbar^{-1}\alpha(z) \right)\frac{1}{1-|\mu|^2} \frac{d \conj{v}}{\conj{\nu}}\\
			&= \hbar R^2 \conj{\beta}(z) h_R^{-2}  \left( 1-\mu\frac{\hbar^{-1}\alpha(z)}{\hbar R^2 \conj{\beta}(z) h_R^{-2}} \right)\frac{1}{1-|\mu|^2} \frac{d \conj{v}}{\conj{\nu}}\\
			&=\hbar R^2 \conj{\beta}(z) h_R^{-2}  \left( 1-\mu\frac{\conj{\hbar}^2 R^2}{\conj{\hbar}^2 R^2}\frac{\alpha(z) h_R^{2}}{\hbar^2 R^2 \conj{\beta}(z)} \right)\frac{1}{1-|\mu|^2} \frac{d \conj{v}}{\conj{\nu}}\\
			&=\hbar R^2 \conj{\beta}(z) h_R^{-2}  \left( 1-\mu\frac{\conj{\mu}}{|\hbar^2 R^2|^2} \right)\frac{1}{1-|\mu|^2} \frac{d \conj{v}}{\conj{\nu}}\\
			&=\left( \frac{1-|\mu|^2/|\hbar^2 R^2|^2}{1-|\mu|^2} \right)\hbar R^2 \conj{\beta}(z) h_R^{-2} \frac{d \conj{v}}{\conj{\nu}}.
		\end{align*}
	\end{proof}
	
	\section{The case of zero degree}
	\label{degree zero section}
	In this section we consider the case $\deg(L)=0$, where
	the construction has slightly different features. In particular, as already noted
	in Remark~\ref{degree zero}, a different argument is required for the
	existence of the conformal limit and we start with this. 
	
	\subsection{The conformal limit} \label{section the conformal limit degree zero}
	
	The conformal limit requires the polystability of the $\slr$-Higgs bundle. So we begin by studying the stability in this case.
	
	\begin{proposition}
		\label{stability degree zero}
		Let $L$ be a line bundle with $\deg(L)=0$. Consider the $\slr$-Higgs
		bundle $(E=L\oplus L^{-1}, \Phi=\begin{psmallmatrix}
			0 & \alpha \\
			\beta & 0
		\end{psmallmatrix})$, with $\alpha\in\holosecs{L^2K}$ and
		$\beta \in \holosecs{L^{-2}K}$. Then
		\begin{enumerate}[label=\roman*)]
			\item if both $\alpha=0$ and $\beta=0,$ 
			$(E,\Phi=0)$ is strictly polystable as an $\sl$-Higgs bundle;
			\item if both $\alpha \neq0$ and $\beta \neq 0$, $(E,\Phi)$ is
			polystable. If further $\alpha$ and $\beta$ are not proportional,
			$(E,\Phi)$ is stable;
			\item if one of $\alpha$ and $\beta$ is zero but not the other one,
			$(E,\Phi)$ is unstable.
		\end{enumerate}
	\end{proposition}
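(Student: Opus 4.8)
The plan is to reduce all three cases to a single structural observation: since $\deg(L)=0$, every holomorphic line subbundle $M\subset E=L\oplus L^{-1}$ has $\deg(M)\le 0$. Indeed, the two projections restrict to maps $M\to L$ and $M\to L^{-1}$, elements of $\holosecs{M^{-1}L}$ and $\holosecs{M^{-1}L^{-1}}$, whose target bundles both have degree $-\deg(M)$; since at least one of these maps is nonzero we get $-\deg(M)\ge 0$. Consequently $(E,\Phi)$ is automatically semistable for \emph{every} $\Phi$, and the whole problem is to understand the $\Phi$-invariant subbundles of degree exactly $0$: these are the ones that can obstruct stability, and a splitting into two of them is what yields (strict) polystability.

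First I would record which degree-$0$ subbundles exist. By the same degree count, a nonzero map $M\to L$ (resp.\ $M\to L^{-1}$) with $\deg(M)=0$ forces $M\cong L$ (resp.\ $M\cong L^{-1}$) and is then nowhere vanishing. Hence, when $L^2\not\cong\mathcal{O}$, the only degree-$0$ subbundles are the summands $L$ and $L^{-1}$ themselves, while, when $L^2\cong\mathcal{O}$, there is in addition a $\cp$-family of ``graph'' subbundles, all isomorphic to $L\cong L^{-1}$. A direct computation of the eigenline condition $\Phi(u,v)=\lambda(u,v)$ shows that such a graph, written as $(1,s)$ with $s\in\Hom(L,L^{-1})=\holosecs{L^{-2}}$, is $\Phi$-invariant precisely when $\lambda=\alpha s$ and $s^2=\beta/\alpha$. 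Since $\holosecs{L^{-2}}\neq 0$ exactly when $L^2\cong\mathcal{O}$, in which case $s$ is constant, invariance forces $\beta/\alpha$ to be constant, i.e.\ $\alpha$ and $\beta$ proportional under the identification $\holosecs{L^{2}K}\cong\holosecs{K}\cong\holosecs{L^{-2}K}$ afforded by a choice of isomorphism $L^2\cong\mathcal{O}$.

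With this in hand the three cases are quick. For (i), $\Phi=0$ makes both $L$ and $L^{-1}$ invariant of degree $0$, so $(E,0)=(L,0)\oplus(L^{-1},0)$ is a direct sum of two degree-$0$ (hence stable) line bundles; it is polystable but, having a degree-$0$ invariant subbundle, not stable, i.e.\ strictly polystable. For (ii), since $\alpha,\beta\neq 0$ neither $L$ nor $L^{-1}$ is invariant, so if $\alpha,\beta$ are not proportional (in particular whenever $L^2\not\cong\mathcal{O}$) there is no $\Phi$-invariant degree-$0$ subbundle and $(E,\Phi)$ is stable; if instead they are proportional (so $L^2\cong\mathcal{O}$) the two distinct roots $s=\pm\sqrt{\beta/\alpha}$ give distinct invariant eigenlines $M_\pm$ with $E=M_+\oplus M_-$, exhibiting $(E,\Phi)$ as a direct sum of stable degree-$0$ Higgs line bundles, hence strictly polystable. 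For (iii), say $\beta=0\neq\alpha$: then $\Phi$ is a nonzero nilpotent, $L$ is the unique invariant line subbundle and has degree $0$, so $(E,\Phi)$ is not stable; moreover a nonzero nilpotent admits no invariant complement, so $E$ does not split into invariant line bundles and $(E,\Phi)$ is not polystable (the subcase $\alpha=0\neq\beta$ is symmetric, with $L^{-1}$ in the role of $L$).

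The main obstacle is case (ii): everything hinges on the classification of degree-$0$ invariant subbundles and on making precise the meaning of ``$\alpha$ and $\beta$ proportional'', which is exactly the assertion that $L^2\cong\mathcal{O}$ and $\beta/\alpha$ is a nonzero constant. The eigenline computation together with the criterion $\holosecs{L^{-2}}\neq 0\Leftrightarrow L^2\cong\mathcal{O}$ is the crux; the background semistability and the cases (i) and (iii) are then essentially bookkeeping, the only delicate point in (iii) being that ``unstable'' is to be read as failure of polystability, since the underlying bundle is always semistable.
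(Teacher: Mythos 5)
Your proof is correct and follows essentially the same strategy as the paper's: reduce everything to the $\Phi$-invariant degree-zero line subbundles, show that the existence of one other than the summands forces $L^2\cong\mathcal{O}$ and the proportionality of $\alpha$ and $\beta$, and in that case produce the eigenline splitting. The organizational difference --- you first classify all degree-zero subbundles ($L$, $L^{-1}$, and, when $L^2\cong\mathcal{O}$, the constant-slope graphs) and then impose invariance, whereas the paper starts from an invariant $S$ and extracts the two projections $s_1,s_2$ --- is immaterial, since your graph parameter $s$ is the paper's $s_2 s_1^{-1}$. Two points are worth recording. First, your explicit splitting $E=M_+\oplus M_-$ with slopes $\pm\sqrt{\beta/\alpha}$ is actually more careful than the paper's: the paper embeds $S^{-1}$ via $(s_2^{-1},s_1^{-1})$, but that inclusion has image equal to $S$ itself (it differs from $(s_1,s_2)$ by the nowhere-vanishing factor $s_1s_2\in\holosecs{S^{-2}}$, and indeed the computation there exhibits it as an eigenvector for the \emph{same} eigenvalue $c$, which for a traceless $\Phi\neq 0$ cannot span a complement); the correct invariant complement is the $(-c)$-eigenline spanned by $(s_1,-s_2)$, which is exactly your $M_-$. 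Second, your reading of ``unstable'' in (iii) as failure of both stability and polystability is the right one: as you observe, every line subbundle has nonpositive degree, so the Higgs bundle is always semistable, and the paper's own proof of (iii) establishes precisely non-stability (via the invariant summand) together with non-polystability (via the non-diagonalizability of a nonzero nilpotent Higgs field).
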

	
	\begin{proof}
		Case $i)$ is immediate.
		For case $iii)$ we note that if only one of
		$\alpha$ or $\beta$ is non-zero, then the Higgs field
		$\Phi$ is nilpotent. This means $\Phi$ is not diagonalizable,
		and so $(E,\Phi)$ is not a direct sum of Higgs line bundles. This
		means it is not strictly polystable. It is also not stable, since
		either $L$ or $L^{-1}$ is $\Phi$-invariant.
		We are left
		with case $ii)$.  Since $E$ is polystable as a bundle, only degree
		zero subbundles can destabilize the Higgs bundle $(E,\Phi)$.
		Suppose there is a $\Phi$-invariant holomorphic line subbundle $S$ of degree
		zero. Write $s_1:S\to L$ and $s_2:S\to L^{-1}$ for the maps
		induced by the inclusion $s:S\hookrightarrow E=L\oplus
		L^{-1}$. Both of these maps are non-zero because neither $L$ nor
		$L^{-1}$ is $\Phi$-invariant. Hence (since
		$\deg(S)=\deg(L)=0$) we have $s_1:S^{-1}L\cong\mathcal{O}$ and $s_2^{-1}:SL\cong\mathcal{O}$. Therefore $s_1/s_2:L^2\cong\mathcal{O}$.
		Now, the subbundle $S$ being $\Phi$-invariant means
		that $\Phi (s) = c s$ for a non-zero section $c$, i.e.,
		\begin{equation*} \Phi (s)
			= \begin{pmatrix}
				0 & \alpha \\
				\beta & 0
			\end{pmatrix}
			\begin{pmatrix}
				s_1 \\s_2
			\end{pmatrix}
			= \begin{pmatrix}
				\alpha s_2 \\
				\beta s_1
			\end{pmatrix} = \begin{pmatrix} c s_1 \\c s_2
			\end{pmatrix}=c s.
		\end{equation*}
		Thus $c\alpha s_2^2=c^2 s_1s_2 = c\beta s_1^2$ and, in view of the isomorphism $s_1/s_2:L^2\cong\mathcal{O}$ we conclude that $\alpha$
		and $\beta$ are proportional sections of $L^2K \cong L^{-2}K\cong K$.
		Finally, we can include $S^{-1}$ in $L\oplus L^{-1}$ using
		$\left(\begin{smallmatrix}
			s_2^{-1}\\
			s_1^{-1}
		\end{smallmatrix}\right)$ and, since
		\begin{displaymath}
			\begin{pmatrix}
				0 & \alpha \\
				\beta & 0
			\end{pmatrix}
			\begin{pmatrix}
				s_2^{-1} \\s_1^{-1}
			\end{pmatrix}
			= \begin{pmatrix}
				\alpha s_1^{-1} \\
				\beta s_2^{-1}
			\end{pmatrix}
			= \begin{pmatrix}
				c s_2^{-1} \\
				c s_1^{-1}
			\end{pmatrix}
		\end{displaymath}
		by the above calculation, we conclude that $S^{-1}$ is a $\Phi$-invariant complement to $S$. 
		
		In conclusion, if there is a destabilizing $\Phi$-invariant subbundle $S$, then the sections $\alpha$ and $\beta$ are proportional and $(E,\Phi)$ decomposes as the direct sum of Higgs bundles $ S\oplus S^{-1}$, with the induced Higgs fields. Thus $(E,\Phi)$ is strictly polystable. Otherwise, there are no such subbundles and $(E,\Phi)$ is stable. 
	\end{proof}
	
	Thus, in the case $\deg(L)=0$, the conformal limit can
	be analyzed using the solution of the scaled vortex equations for
	either both $\alpha=0=\beta$ or both non-zero. In this special case,
	since the bundle $E=L\oplus L^{-1}$ itself is stable,
	the limit of the solution $h_R$ of the scaled vortex equations as
	$R\to 0$ does exist, and it is simply a metric $h_0$ of zero curvature
	on $L$. The conformal limit is then directly calculated by taking the
	limit in the family (\ref{R family}) and it is
	$\nabla_{\hbar,0} = A_{h_0} + \hbar^{-1}\Phi$, where $A_{h_0}$
	is the diagonal Chern connection associated to the Hermitian metric
	$h_0$. It is a partial oper (since $E=\Gr(E)$ and it is semistable),
	and, when $\beta\neq0$, it defines a branched complex projective
	structure compatible with $X$, just as in the previous section.
	In conclusion, we have the following.
	
	\begin{theorem}
		Let $(E=L\oplus L^{-1}, \Phi=\begin{psmallmatrix}
			0 & \alpha \\
			\beta & 0
		\end{psmallmatrix})$
		be a polystable $\slr$-Higgs
		bundle with $\deg(L)=0$.
		Then the $\hbar$-conformal limit of $(E,\Phi)$ exists and it has
		the structure of a partial
		$\sl$-oper. 
	\end{theorem}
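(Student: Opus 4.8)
The plan is to mirror the strategy of the $\deg(L)>0$ case, but to exploit the fact that here the degenerate Higgs bundle obtained in the limit is itself polystable, so that no rescaling of the metric is needed. First I would invoke Proposition~\ref{stability degree zero} to reduce to the two polystable configurations: either $\alpha=\beta=0$, or both $\alpha\neq0$ and $\beta\neq0$. In the first case $\Phi=0$ and there is nothing to take a limit of. In the second, the key observation is that the rescaled path $R\mapsto(E,R\Phi)$, with $R\Phi=\begin{psmallmatrix}0 & R\alpha\\ R\beta & 0\end{psmallmatrix}$, remains polystable for every $R\in[0,\infty)$: for $R>0$ the pair $(R\alpha,R\beta)$ has both entries non-zero, so Proposition~\ref{stability degree zero}(ii) applies, while at $R=0$ the Higgs field vanishes and $(E,0)$ is strictly polystable because $E=L\oplus L^{-1}$ is a direct sum of degree-zero line bundles.

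Next I would establish existence of $\lim_{R\to0}h_R=h_0$ by the argument of Corollary~\ref{limit exists in space of harmonic metrics}: the continuous path of polystable Higgs bundles $(E,R\Phi)$ corresponds, under the non-abelian Hodge homeomorphism, to a continuous path of harmonic metrics $h_R$, each being the solution of the $R$-scaled vortex equation for $(\alpha,\beta)$. Its limit $h_0$ is the harmonic metric of $(E,0)$, and since the Higgs field vanishes there, Hitchin's equation degenerates to the zero-curvature equation $\DDDz\DDz\log h_0=0$; such a flat metric exists precisely because $\deg(L)=0$ (cf.\ Remark~\ref{zero curvature}). The crucial contrast with the $\deg(L)>0$ case is that here one takes the limit of $h_R$ directly, without the rescaling $h_R/R$: the obstruction encountered there—that $(0,\beta)$ is unstable—disappears because $(E,0)$ is polystable.

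With the limit metric in hand, the conformal limit is computed by passing to $R\to0$ in the family (\ref{R family}). Since $h_R\to h_0$ is finite and non-vanishing while $R^2\to0$, the adjoint term $\hbar R^2\Phi^{*_{H_R}}$ disappears, leaving
\begin{equation*}
\nabla_{\hbar,0}=A_{h_0}+\hbar^{-1}\Phi,
\end{equation*}
where $A_{h_0}$ is the diagonal Chern connection of the flat metric $h_0$. Because $A_{h_0}^{(0,1)}=\DDD_E$ and $\Phi$ is of type $(1,0)$, the holomorphic structure $\nabla_{\hbar,0}^{(0,1)}$ is just the original $\DDD_E$, so $L\subset E$ is a holomorphic subbundle and the filtration $0\subset L\subset(E,\nabla_{\hbar,0})$ is defined. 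Reading off the lower-left entry of $\hbar^{-1}\Phi$ identifies the second fundamental form as $\hbar^{-1}\beta$, so the associated graded is the Higgs bundle $(L\oplus L^{-1},\begin{psmallmatrix}0&0\\ \hbar^{-1}\beta&0\end{psmallmatrix})$.

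Finally I would verify the semistability required by the definition of partial oper. Since $\deg(L)=0$, the bundle $L\oplus L^{-1}$ is semistable as a holomorphic bundle—every line subbundle maps non-trivially to a degree-zero summand and hence has non-positive degree—so the associated graded is automatically semistable as a Higgs bundle, whether or not $\beta$ vanishes; this is exactly the phenomenon recorded in Remark~\ref{oper remark}. Hence $0\subset L\subset(E,\nabla_{\hbar,0})$ is a partial $\sl$-oper. The only genuinely non-formal step is the existence of $\lim_{R\to0}h_R$, and the main point to get right there is that $\deg(L)=0$ guarantees polystability of the limiting Higgs bundle $(E,0)$, which is precisely what allows the direct, unrescaled limit to exist.
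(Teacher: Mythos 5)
Your proposal is correct and follows essentially the same route as the paper: the limit $h_R\to h_0$ is taken directly (no rescaling by $R$) because the limiting Higgs bundle $(E,0)$ is itself polystable when $\deg(L)=0$, yielding a flat metric $h_0$ and the conformal limit $\nabla_{\hbar,0}=A_{h_0}+\hbar^{-1}\Phi$, whose associated graded is semistable since $E=\Gr(E)=L\oplus L^{-1}$ has degree-zero summands. Your write-up in fact supplies more detail than the paper does on why the limit of the harmonic metrics exists (via the polystable path $R\mapsto(E,R\Phi)$ and the non-abelian Hodge homeomorphism), but the underlying argument is the same.
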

	
	\begin{remark}
		Proposition~\ref{change solution} is no longer necessary to change the
		solution. This is consistent with the fact that in
		this case the vortex equations for $(\beta,0)$
		do not have a solution.
	\end{remark}

	\subsection{The projective structures coming from the conformal limit} \label{section projective structures degree zero}
	
	In the zero degree case
	the condition $\Div(\alpha)\geq \Div (\beta)$
	implies that $\alpha = k \beta$, $k\in \CC^*$, since both $\alpha$ and $\beta$ must have the same number of zeros counted with multiplicity. We are thus in the situation $ii)$ of Proposition~\ref{stability degree zero}, where $(E=L\oplus L^{-1}, \Phi=\begin{psmallmatrix}
		0 & k \beta \\
		\beta &0
	\end{psmallmatrix})$ is a polystable Higgs bunlde. Note, in particular, that the condition implies that $L^4\cong \mathcal{O}$ since $k=\frac{\alpha}{\beta}\in \holosecs{L^4}$.
	The proof of Theorem~\ref{main theorem} will carry through, except now the function $u$ will read
	$$u=\log \left|\frac{\alpha(z)}{\beta(z)}h_R^2(z) \right|^2=\log \left|k h_R^2(z) \right|^2$$ and it will be constant by the maximum principle.
	Of course in this case one can directly check that $h_R=|k|^{-1/2}$ is a solution of the $R$ scaled-vortex equations, for if $\alpha = k \beta$, the equation reads
	\begin{equation*}
		\DDDz \DDz \log h_R = R^2\left(|\alpha|^2h_R^2 -|\beta|^2h_R^{-2}\right)= R^2\left(|k|^2 |\beta|^2h_R^2 -|\beta|^2h_R^{-2}\right).
	\end{equation*}
	The right hand side when $h_R=|k|^{-1/2}$ is $R^2\left(|k|^2 |\beta|^2|k|^{-1} -|\beta|^2|k|\right)=0$ which is precisely $\DDDz \DDz \log |k|^{-1/2}=0$.
	The Beltrami differential will now be $ \mu=\hbar^2 R^2 \conj{k} \frac{\conj{\beta}}{\beta} h_R^2=\hbar^2 R^2 \frac{\conj{k}}{|k|} \frac{\conj{\beta}}{\beta}$. The subbundle $L$ will determine a partial oper structure and, when $\beta\neq0$, a complex projective structure compatible with $X_\mu$. 
	Thus we have the following result.
	
	\begin{theorem} \label{main theorem degree zero}
		Fix $R\in\RR^+$ and $\hbar, k \in \CC^*$ such that $|\hbar^2 R^2|<1$. Let $L$ be a line bundle such that $L^4\cong \mathcal{O}$ and consider the polystable $\slr$-Higgs bundle $(L\oplus L^{-1},\Phi=\begin{psmallmatrix}
			0 & k \beta \\
			\beta &0
		\end{psmallmatrix})$, where $0\neq \beta \in \holosecs{L^2K}$.
		Then the family (\ref{R family}) is
		\begin{equation*}
			\nabla_{\hbar,R}=d+\begin{pmatrix}	
				0& \hbar^{-1} k \beta  + \hbar R^2 \conj{\beta} |k| \\
				\hbar^{-1} \beta + \hbar R^2 \frac{\conj{k}}{|k|}\conj{\beta}  & 0
			\end{pmatrix}.
		\end{equation*}
		Define $\mu=\hbar^2 R^2 \frac{\conj{k}}{|k|}
		\frac{\conj{\beta}}{\beta}$. Then $\mu$ is a Beltrami differential.
		Further  $\nabla_{\hbar, R}$ determines a branched projective
		structure compatible with $X_\mu$ with branching divisor 
		$\Div(\beta)$. Its extension class $[\omega]$ is trivial.
	\end{theorem}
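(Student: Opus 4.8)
The plan is to run the argument of Theorem~\ref{main theorem} in the degenerate configuration $\alpha=k\beta$, where the harmonic metric becomes explicit. As observed just before the statement, we are in case ii) of Proposition~\ref{stability degree zero}, so $(E,\Phi)$ is polystable, and the constant $h_R\equiv|k|^{-1/2}$ solves the $R$-scaled vortex equation of Definition~\ref{vortex eqs} for $(\alpha,\beta)=(k\beta,\beta)$ (the direct check recorded above). I would then substitute $h_R\equiv|k|^{-1/2}$, so that $\DD\log h_R=0$, $h_R^2=|k|^{-1}$ and $h_R^{-2}=|k|$, together with $\alpha=k\beta$ and $\conj{\alpha}=\conj{k}\conj{\beta}$, into the family \eqref{R family}; this produces at once the stated form of $\nabla_{\hbar,R}$ with vanishing diagonal.

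Next I would verify that $\mu=\hbar^2 R^2\frac{\conj{k}}{|k|}\frac{\conj{\beta}}{\beta}$ is a Beltrami differential. Exactly as in the proof of Theorem~\ref{main theorem}, $\mu$ is a section of $\conj{K}\otimes K^{-1}$, smooth on $X\setminus\Div(\beta)$. The key simplification is that its modulus is now \emph{constant}: since $\alpha/\beta=k$ has no zeros, the function $u$ of Lemma~\ref{lem:sup-mu-proof} is $u=\log|k\,h_R^2|^2\equiv0$, whence $|\mu|\equiv|\hbar^2 R^2|$. This is precisely where the hypothesis must be strict, $|\hbar^2 R^2|<1$ (in contrast to the non-strict bound in positive degree, where the maximum principle forced $u<0$ with strict slack): it gives $\|\mu\|_\infty=|\hbar^2 R^2|<1$, so $\mu\in\beltrami(X)$ and Theorem~\ref{Xmu} yields $X_\mu$ with coordinate $v$ satisfying $dv=\nu(dz+\mu\,d\conj{z})$, $\nu=\DD_z v\neq0$.

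With $X_\mu$ in hand I would show $\nabla_{\hbar,R}$ is a partial oper for $X_\mu$, following Theorem~\ref{main theorem}. Factoring the lower-left entry,
\[
\hbar^{-1}\beta(z)\,dz+\hbar R^2\frac{\conj{k}}{|k|}\conj{\beta}(z)\,d\conj{z}
=\hbar^{-1}\beta(z)\bigl(dz+\mu\,d\conj{z}\bigr)
=\hbar^{-1}\frac{\beta(z)}{\nu}\,dv,
\]
which is a $(1,0)_\mu$-form; hence $\nabla_{\hbar,R}^{(0,1)_\mu}$ preserves $L$ and the $X_\mu$-holomorphic second fundamental form is multiplication by $\hbar^{-1}\beta(z)/\nu$, non-zero and with the same zero divisor as $\beta$. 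Since $\deg(L)=0$ the associated graded $L\oplus L^{-1}$ is automatically semistable, so $0\subset L\subset(E,\nabla_{\hbar,R})$ is a partial oper (Remark~\ref{oper remark}); as $\beta_L\neq0$, Theorem~\ref{oper} delivers a genuine branched projective structure compatible with $X_\mu$ with branching divisor $\Div(\beta)$.

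Finally, for the extension class I would extract the $(0,1)_\mu$-part $\omega$ of the upper-right entry $\hbar^{-1}k\beta(z)\,dz+\hbar R^2|k|\,\conj{\beta}(z)\,d\conj{z}$, inverting \eqref{coordinates new}--\eqref{coordinates new2} to isolate the $d\conj{v}$ coefficient; the $\alpha$- and $\beta$-contributions combine to
\[
\frac{1}{1-|\mu|^2}\Bigl(-\mu\,\hbar^{-1}k\beta(z)+\hbar R^2|k|\,\conj{\beta}(z)\Bigr)\frac{d\conj{v}}{\conj{\nu}}.
\]
Substituting $\mu=\hbar^2 R^2\frac{\conj{k}}{|k|}\frac{\conj{\beta}(z)}{\beta(z)}$ turns the first term into $-\hbar R^2|k|\conj{\beta}(z)$, which cancels the second; equivalently, in the extension-class formula derived above the prefactor $1-|\mu|^2/|\hbar^2 R^2|^2$ vanishes because $|\mu|=|\hbar^2 R^2|$. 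Thus $\omega\equiv0$ and $[\omega]=0$. I expect the only genuine content to be this cancellation (i.e.\ the vanishing prefactor) together with the observation that $u\equiv0$ forces strictness $|\hbar^2 R^2|<1$; everything else is a direct specialization of the positive-degree arguments. The one point deserving care is that $\mu$ is smooth only off $\Div(\beta)$, so $X_\mu$ arises from a bounded Beltrami coefficient with $\|\mu\|_\infty<1$ rather than a globally smooth one.
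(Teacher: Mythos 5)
Your proposal is correct and follows essentially the same route as the paper: specialize the proof of Theorem~\ref{main theorem} to $\alpha=k\beta$ with the explicit constant solution $h_R=|k|^{-1/2}$, observe that $u\equiv 0$ forces $|\mu|\equiv|\hbar^2R^2|$ (hence the strict hypothesis replaces the maximum-principle slack), and read off the partial-oper structure and the vanishing of $\omega$ from the same coordinate computations. Your closing caveat that $\mu$ is merely bounded (not smooth) at $\Div(\beta)$ is a point the paper glosses over, but it does not change the argument since the Beltrami equation only requires $\|\mu\|_\infty<1$.
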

	
	\section{Geometric interpretation of results} \label{section geometric
		interpretation of the results}
	
	\subsection{Curves in $\mathscr{B}(M)$} \label{section curves}
	Let $(L\oplus L^{-1}, 
	\begin{psmallmatrix}
		0 & \alpha \\
		\beta & 0
	\end{psmallmatrix})$
	be a polystable $\slr$-Higgs bundle with  $\beta\neq 0$. Assume that $\Div(\alpha)\geq \Div(\beta))$ and $|R^2\hbar^2|\leq1$ (or $|R^2\hbar^2|<1$ if $\deg(L)=0$). Then, by Theorems~\ref{main theorem} and \ref{main theorem degree zero}, the partial oper structure on $E$ given by the filtration $0\subset L \subset E$ and the flat connection $\nabla_{\hbar,R}$ determines a branched projective structure. 
	For each $\hbar$ and $R$, we will denote this structure by $\mathcal{P}^{\alpha}_{\beta}(\hbar,R)\in \mathscr{B}(M)$ or just by $\mathcal{P}(\hbar,R)$ whenever $\alpha$ and $\beta$ are fixed. 
	By construction these branched projective structures lift to $\sl$, so by
	a slight abuse of notation we shall also write $\mathcal{P}(\hbar,R)=(\nabla_{\hbar,R}, L)$, i.e., as a pair consisting of a flat $\sl$-connection and a transverse line subbundle (cf.\ Theorem~\ref{oper}).
	
	Note that the dependence on $\hbar$ and $R$ is continuous, also for $R=0$, since the connection $\nabla_{\hbar,R}$ in the configuration space of flat connections depends continuously on the parameters (Remark~\ref{continuity on configuration space}).
	The structure $\mathcal{P}(\hbar,R)$ is compatible with the Riemann surface $X_\mu$, and thus the forgetful map $\mathcal{T}:\mathscr{B}(M) \to \mathscr{T}(M)$ takes $\mathcal{P}(\hbar,R)\mapsto [X_\mu]$.
	In all cases, regardless of the degree of $L$, the Beltrami differential $\mu$ is given by the expression 
	$$\mu=\hbar^2 R^2 \frac{\conj{\alpha}}{\beta} h_R^2=\hbar^2 R^2 h_R^2 \frac{\conj{\alpha} \conj{\beta}}{|\beta|^2} \frac{|\alpha|}{|\alpha|} =\hbar^2 R^2 h_R^2 \frac{|\alpha|}{|\beta|} \frac{\conj{\alpha} \conj{\beta}}{|\alpha \beta|},
	$$
	where $h_R$ is the solution of the scaled vortex equations for
	$(\alpha,\beta)$, $\beta\neq0$ and it is understood that $\mu=0$ if $\alpha=0$.
	Using Proposition~\ref{change solution} in the case $\deg(L)\neq0$, or the fact that $\alpha=k\beta$ and $h_R=|k|^{-1/2}$, when $\deg(L)=0$ (Proposition~\ref{main theorem degree zero}), for some constant $k\in\CC^*$ we can write $\mu$ as:
	\begin{equation} \label{Beltrami differential all cases}
		\mu=\hbar^2 R^2 h_R^2 \frac{|\alpha|}{|\beta|} \frac{\conj{\alpha} \conj{\beta}}{|\alpha \beta|}
		=
		\begin{cases}
			\hbar^2 R^2  \frac{\conj{k}}{|k|} \frac{\conj{\beta}^2}{|\beta|^2} &\text{if $\deg(L)=0$} \\
			\hbar^2 R^4 h^2 \frac{|\alpha|}{|\beta|} \frac{\conj{\alpha} \conj{\beta}}{|\alpha \beta|}  &\text{if $\deg(L)\neq 0$} 
		\end{cases}
	\end{equation}
	where $h$ is the solution of the unscaled vortex equations for $(R^2\alpha,\beta)$.
	In conclusion, if we fix a pair $(\alpha, \beta)$ with $\Div(\alpha)\geq\Div(\beta)$  and $\beta\neq 0$ and denote the subset of valid parameters $\hbar$ and $R$ by 
	\begin{align*}
		\mathscr{D}=\{(\hbar,R)\in\CC^{*}\times \RR^+_0 \;\vert\; |\hbar^2R^2|\leq1\}, \text{ or }\\
		\mathscr{D}=\{(\hbar,R)\in\CC^{*}\times \RR^+_0 \;\vert\; |\hbar^2R^2|<1\}, \text{ if } \deg(L)=0
	\end{align*}
	
	we get a map $\mathcal{P}^\alpha_\beta:	\mathscr{D} \to \mathscr{B}(M)$ given by $	(\hbar,R) \mapsto \mathcal{P}^\alpha_{\beta}(\hbar,R)$. 
	Thus we have continuous maps
	\begin{align*}
		\mathscr{D} &\to \mathscr{B}(M)\to \mathscr{T}(M)\\
		(\hbar,R) &\mapsto \mathcal{P}^\alpha_{\beta}(\hbar,R) \mapsto X_{\mu(\hbar, R)},
	\end{align*}
	where the associated complex structure $X_{\mu(\hbar, R)}$ is determined by $\mu=\mu(\hbar, R)$ in equation (\ref{Beltrami differential all cases}). We remark that the map is not injective. In particular, $\mu(\hbar,R)=\mu (\hbar', R')$ if $\hbar^2 R^2=	\hbar'^2 R'^2$ for degree zero, or $\hbar^2 R^4=\hbar'^2 R'^4$,  for non-zero degree. 
	We can also fix $\hbar\in\CC^*$, and in that case we obtain a curve $\mathcal{P}^{\alpha}_\beta(R):=\mathcal{P}^{\alpha}_\beta(\hbar, R)$ in $\mathscr{B}(M)$ projecting to a curve $\gamma(R)=X_\mu(\hbar,R)$ in Teichmüller space. In the next section, we study the geometry of this curve which, in the degree zero case, we show to be a Teichmüller geodesic.
	
	\subsection{Teichmüller geodesics and disks} 
	\label{section teichmuller geodesics}
	The Teichmüller space $\mathscr{T}(M)$ has several interesting metrics. One of them is the Teichmüller metric, which is a Finsler metric, and whose distance function is defined using the properties of quasi-conformal mappings. In particular, the distance between $X_\mu,  X_{\mu'} \in \mathscr{T}(M)$ is the smallest possible maximal dilation of a quasi-conformal mapping $f:X_\mu\to X_{\mu'}$ in the isotopy class of the identity of $M$. It is a classical result of Teichmüller that each such class has a unique map that realizes this minimum, the so-called Teichmüller mappings. These maps have complex dilations of the form 
	$$\mu=\frac{\DDD f}{ \DD f}=c \frac{\conj{q}}{|q|}, \qquad 0<c<1, $$ 
	where $c$ is a constant and $q$ a quadratic differential. They allow one to describe the geodesic rays through the origin $X_0$ in $\mathscr{T}(M)$ (cf.\ \cite[Section V.7.7.]{lehto:1987aa}).
	\begin{theorem}
		Let $\mu=t \frac{\conj{q}}{|q|}$ be a Beltrami differential with $q\in \holosecs{X,K^2}$ a quadratic differential, and $t\in[0,1)$. Then $t \mapsto X_{\mu(t)}$ is a geodesic ray in the Teichmüller metric, called the \emph{ray associated with $q$}.
	\end{theorem}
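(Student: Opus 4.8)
The plan is to combine Teichmüller's existence-and-uniqueness theorem for extremal quasiconformal maps with an explicit computation in the natural coordinate of $q$. Recall that the Teichmüller distance is $d_T(X_\mu,X_{\mu'})=\tfrac12\inf_f\log K(f)$, where $f$ ranges over quasiconformal maps in the isotopy class of $\mathrm{id}_M$ and $K(f)=\frac{1+\|\mu_f\|_\infty}{1-\|\mu_f\|_\infty}$, and that this infimum is realized by a \emph{unique} Teichmüller map, whose complex dilatation has the special form $k\,\conj{q}/|q|$ with $0\leq k<1$, so that $d_T=\tfrac12\log\frac{1+k}{1-k}$. The whole argument is just the verification that the maps linking the points $X_{\mu(t)}$ are themselves of this extremal form.

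First I would observe that, by the very construction of $X_{\mu(t)}$ in Theorem~\ref{Xmu}, the identity map $\mathrm{id}\colon X=X_0\to X_{\mu(t)}$ has complex dilatation exactly $\mu(t)=t\,\conj{q}/|q|$, which is already of Teichmüller form. Uniqueness then identifies it as the Teichmüller map from $X_0$ to $X_{\mu(t)}$ and gives
\[
d_T(X_0,X_{\mu(t)})=\tfrac12\log\tfrac{1+t}{1-t}.
\]
To obtain the geodesic (additivity) property, I would pass to the natural coordinate $\zeta=\xi+i\eta$ of $q$ away from its zeros, in which $q=d\zeta^2$ and $\conj{q}/|q|=d\conj{\zeta}/d\zeta$, so that $\mu(t)$ becomes the constant $t$. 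Solving $\DD_{\conj\zeta}v=t\,\DD_\zeta v$ yields the global chart $v_t=(1+t)\xi+i(1-t)\eta$, exhibiting $X_{\mu(t)}$ as the affine stretch of $X_0$ by $(1+t)$ horizontally and $(1-t)$ vertically.

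Then, for $0\leq s\leq t<1$, the identity $\mathrm{id}\colon X_{\mu(s)}\to X_{\mu(t)}$ is, in these coordinates, the affine map $v_s\mapsto v_t$ stretching by $\lambda_1=\frac{1+t}{1+s}$ horizontally and $\lambda_2=\frac{1-t}{1-s}$ vertically. Its Beltrami coefficient is the real constant $\frac{\lambda_1-\lambda_2}{\lambda_1+\lambda_2}$ in the $v_s$-coordinate, hence again a (constant) multiple of $\conj{q}/|q|$ on $X_{\mu(s)}$; by uniqueness it is the Teichmüller map for $q$ on $X_{\mu(s)}$, with maximal dilatation $\lambda_1/\lambda_2$, so that
\[
d_T(X_{\mu(s)},X_{\mu(t)})=\tfrac12\log\tfrac{\lambda_1}{\lambda_2}=\tfrac12\log\tfrac{1+t}{1-t}-\tfrac12\log\tfrac{1+s}{1-s}.
\]
This is exactly the additivity $d_T(X_0,X_{\mu(s)})+d_T(X_{\mu(s)},X_{\mu(t)})=d_T(X_0,X_{\mu(t)})$, so reparametrizing by arc length $\tau=\operatorname{arctanh}t=\tfrac12\log\frac{1+t}{1-t}$ makes $\tau\mapsto X_{\mu(\tanh\tau)}$ an isometric embedding of $[0,\infty)$, i.e.\ a geodesic ray.

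The main obstacle is not the algebra but the care needed at the finitely many zeros of $q$, where the natural coordinate degenerates: one must check that the affine-stretch description, the constant modulus of the Beltrami coefficient, and the value of $\|\mu\|_\infty$ persist across the zero set. This holds because the relevant maps are globally defined quasiconformal homeomorphisms of $M$ whose dilatation is determined almost everywhere, so omitting a finite set changes nothing. The only genuinely external input is Teichmüller's uniqueness theorem, which certifies that each affine stretch above is in fact extremal in its isotopy class; granting that, the statement reduces to the elementary dilatation computation carried out in the natural coordinate.
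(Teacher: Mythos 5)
The paper does not prove this statement: it is quoted as a classical fact with a pointer to Lehto \cite[Section V.7.7]{lehto:1987aa}, so there is no internal proof to compare against. Your argument is the standard textbook proof and is essentially correct: you identify the identity $X_0\to X_{\mu(t)}$ as a Teichm\"uller map, compute the affine stretch in the natural coordinate of $q$, and deduce additivity of the distances, which is exactly what makes the (arc-length reparametrized) path an isometric embedding of $[0,\infty)$. Two small points deserve more care. First, the Beltrami coefficient of the intermediate map $X_{\mu(s)}\to X_{\mu(t)}$ is a constant multiple of $\overline{q_s}/|q_s|$ where $q_s=dv_s^2$ is the \emph{terminal} quadratic differential on $X_{\mu(s)}$, not literally of $\conj{q}/|q|$ (which is a Beltrami differential on $X$, not on $X_{\mu(s)}$); to invoke Teichm\"uller's uniqueness theorem on $X_{\mu(s)}$ you must check that $dv_s^2$, defined a priori only away from the zeros of $q$, extends to a holomorphic quadratic differential on all of $X_{\mu(s)}$ (it does, with the same zero orders, because the affine stretch is a homeomorphism and $|v_s|\asymp|\zeta|$ near each zero; this is the standard statement that the image of a Teichm\"uller map carries a terminal differential). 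Second, you should note that the local natural coordinates $\zeta$ are only defined up to $\zeta\mapsto\pm\zeta+c$, and that the affine stretch commutes with these transitions, so the charts $v_t$ genuinely glue to the global structure $X_{\mu(t)}$. With these points made explicit, the proof is complete, resting only on Teichm\"uller's existence and uniqueness theorem as external input.
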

	\begin{remark}
		One can even show that if $t$ is allowed to be in the hyperbolic disk $\mathbb{D}$ the map $t \mapsto X_{\mu(t)}$ is an isometry \cite[Theorem V.9.3.]{lehto:1987aa}. Maps of this form are called complex geodesics or \emph{Teichmüller Disks}.
	\end{remark}
	Observing the constructed $\mu$ in equation (\ref{Beltrami differential all cases}) for the case of $\deg(L)=0$ we immediately conclude that the curve $\gamma(t)$ is a (reparameterization) of a Teichmüller geodesic ray.
	
	\begin{theorem}
		\label{thm:teichmuller-geodesic}
		Let $\mathcal{P}^{k\beta}_\beta(R)$  be the $R$-family of branched projective structures associated to the conformal limit of  $E=L\oplus L^{-1}$, with $L^4\cong \mathcal{O}$,  and Higgs field $\Phi=\begin{psmallmatrix}
			0 & k \beta\\
			\beta &0
		\end{psmallmatrix}$, where $0\neq \beta \in
                \holosecs{L^2K}$ and $k\in \CC^*$. Let $\gamma(R)=X_{\mu(R)}$, with $\mu(R)=\hbar^2 R^2  \frac{\conj{k}}{|k|} \frac{\conj{\beta}^2}{|\beta|^2}$, be the curve of associated Riemann surface structures in $\mathscr{T}(M)$. Assume $|\hbar^2R^2|<1$. Then $\gamma(R)$ is (a reparameterization of) a geodesic ray.
	\end{theorem}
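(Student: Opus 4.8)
The plan is to reduce the statement to the theorem on Teichmüller geodesic rays quoted just above by putting $\mu(R)$ into the normal form $t\,\frac{\conj{q}}{|q|}$ for a genuine holomorphic quadratic differential $q\in\holosecs{K^2}$ and a parameter $t=t(R)\in[0,1)$.

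First I would observe that $q:=\beta^2$ is a holomorphic quadratic differential on $X$. Indeed $\beta\in\holosecs{L^2K}$, so $\beta^2\in\holosecs{L^4K^2}$, and the hypothesis $L^4\cong\mathcal{O}$ gives $L^4K^2\cong K^2$; thus $q=\beta^2\in\holosecs{K^2}$. Writing $\beta$ locally in a frame as $\beta(z)\,dz$, the differential $q$ is locally $\beta(z)^2\,dz^2$, and hence
\begin{equation*}
\frac{\conj{q}}{|q|}=\frac{\conj{\beta}^2}{|\beta|^2}
\end{equation*}
as a section of $\conj{K}\otimes K^{-1}$, both sides being the local expression $\frac{\conj{\beta(z)}^2}{|\beta(z)|^2}\,\frac{d\conj{z}}{dz}$.

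Next I would isolate the constant from the $R$-dependence. The prefactor $\hbar^2\frac{\conj{k}}{|k|}$ is a fixed nonzero complex number; writing it as $|\hbar|^2 e^{i\psi}$ with $e^{i\psi}=\frac{\hbar^2}{|\hbar|^2}\frac{\conj{k}}{|k|}$ a unit modulus constant, we obtain
\begin{equation*}
\mu(R)=|\hbar|^2R^2\,e^{i\psi}\,\frac{\conj{q}}{|q|}.
\end{equation*}
The unit factor $e^{i\psi}$ is harmless: replacing $q$ by the holomorphic quadratic differential $q':=e^{-i\psi}q$ one has $\frac{\conj{q'}}{|q'|}=e^{i\psi}\frac{\conj{q}}{|q|}$, so that $\mu(R)=t(R)\,\frac{\conj{q'}}{|q'|}$ with $t(R):=|\hbar|^2R^2=|\hbar^2R^2|$.

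Finally I would apply the quoted theorem. Since by assumption $|\hbar^2R^2|<1$ we have $t(R)\in[0,1)$, so $t\mapsto X_{t\,\conj{q'}/|q'|}$ is precisely the Teichmüller geodesic ray associated with $q'$. The map $R\mapsto t(R)=|\hbar|^2R^2$ is a continuous, strictly increasing change of parameter of $\RR^+$ onto an initial segment of $[0,1)$, and $\gamma(R)=X_{\mu(R)}$ is its composition with this geodesic ray; hence $\gamma$ is a reparametrization of a geodesic ray, as claimed. The only point requiring care is the bookkeeping of moduli and phases in the first two steps; the geometric content is carried entirely by the identification $q=\beta^2\in\holosecs{K^2}$, which is where the torsion hypothesis $L^4\cong\mathcal{O}$ is used.
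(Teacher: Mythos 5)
Your proposal is correct and follows the paper's own (very brief) argument: the paper likewise observes that $\mu(R)$ has the Teichmüller form $t\,\conj{q}/|q|$ for a quadratic differential proportional to $k\beta^2$ (using $L^4\cong\mathcal{O}$) and invokes the quoted theorem on geodesic rays; your version merely makes explicit the phase bookkeeping and the reparametrization $t(R)=|\hbar^2R^2|$, and your $q'$ agrees up to a positive constant with the differential named in the paper's subsequent remark.
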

	
	\begin{remark}
		This geodesic ray is associated with the quadratic differential $-\arg(\hbar^2) k\beta^2\in \holosecs{K^2}$.
		By allowing $\hbar$ to vary, we analogously get (a reparametrization of) the Teichmüller disk associated with $k \beta^2$.
	\end{remark}
	\subsection{Reality properties} \label{sections reality properties}
	Let us now study the family $\mathcal{P}^{\alpha}_{\beta}(\hbar, R)$ when $\deg(L)>0$ for some specific parameters. We will show that there exist values of $\hbar$ and $R$ for which the structure $\mathcal{P}^{\alpha}_{\beta}(\hbar, R)$ is a branched hyperbolic structure, a concept which we start by recalling.
	
	A branched projective structure $(\rho, d)$ is called \emph{branched hyperbolic} if its developing map $d$ has image inside $\HH^2\subset \CC\PP^1$ (up to conjugation in $\psl(2,\CC)$). Here we see $\CC\PP^1$ as the one-point compactification of $\CC$, and $\HH^2$ as the open upper half-plane inside $\CC$. Since the image of the developing map is preserved by $\rho$, branched hyperbolic structures have real holonomy $\rho:\pi_1(M)\to\psl(2,\RR)\subset\psl(2,\CC)$. The condition of having real holonomy is, however, not enough to guarantee that the structure is hyperbolic, as there exist (even unbranched) projective structures with real holonomy which are not hyperbolic (cf.\ for example \cite{hejhal:1975aa}).
	
	The condition for the holonomy to be real can be written in gauge theoretic terms related to the flat bundle $(E,\nabla_{\hbar,R})$. Recall that the holonomy of this connection lifts to $\sl$ the holonomy of the projective structure $\mathcal{P}^{\alpha}_{\beta}(\hbar, R)$. It is real if, up to gauge equivalence, it lies in $\ssl(2,\RR)$, and this happens if $\nabla_{\hbar,R}$ preserves a real structure $\tau$.
	A real structure $\tau$ is a $\CC$-antilinear automorphism of the bundle $E$ such that $\tau^2=\mathrm{Id}_E$. The condition that $\nabla_{\hbar,R}$ preserves $\tau$ means that $\nabla_{\hbar,R} \circ \tau = \tau_{T^*M} \circ \nabla_{\hbar,R}$, where $\tau_{T^*M}$ just acts as $\tau$ on the section part, and as complex conjugation, mapping $K \to \conj{K}$, on the form part. Equivalently $\tau_{T^*M}$ is the tensor product of $\tau$ and the real structure on the complex cotangent bundle $T^*M$. 
	
	To write in gauge theoretic language the stronger condition that the image of $d$ must be in $\HH^2$, we note that this is the same as asking for the image of $d$ to avoid the real locus $\RR\PP^1\subset \CC\PP^1$ which is the fixed point set of the involution $z\mapsto \conj{z}$ of $\CC\PP^1$.
	This translates to the condition that the line bundle $L\subset E$ induced by $d$ should avoid the fixed point locus of $\tau$ in $E$, i.e., the intersection should only be the zero section of $L$ (cf.\ \cite{alessandrini:2019aa}).
	Using this description we have the following.
	
	\begin{proposition}
		Let $\deg(L)>0$ and $|\hbar|^2R^2=1$. Then $\mathcal{P}^{\alpha}_{\beta}(\hbar, R)$ is a branched hyperbolic structure.
	\end{proposition}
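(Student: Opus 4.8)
The plan is to verify the two gauge-theoretic conditions isolated just before the statement: that $\nabla_{\hbar,R}$ preserves a real structure $\tau$, which forces the holonomy into $\psl(2,\RR)$, and that the subbundle $L$ meets the fixed-point locus of $\tau$ only in the zero section, which forces the developing image into $\HH^2$. Everything rests on exhibiting one explicit $\tau$ built from the harmonic metric $h_R$ and checking that the constraint $|\hbar|^2R^2=1$ is precisely what makes it parallel.

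First I would define the real structure. In the holomorphic frame of $E=L\oplus L^{-1}$ induced by a frame of $L$, set $\tau(v)=M\overline{v}$ with
$$M=\begin{pmatrix} 0 & h_R^{-1}\\ h_R & 0\end{pmatrix},$$
so that $\tau(s_1,s_2)=(h_R^{-1}\overline{s_2},\,h_R\overline{s_1})$. This is $\CC$-antilinear and swaps the two summands through the conjugate-linear identifications $L\cong\overline{L^{-1}}$ induced by the metric $h_R$, so it is globally well defined, independent of the frame; since $h_R$ is a positive real function one has $M\overline{M}=\mathrm{Id}$ and hence $\tau^2=\mathrm{Id}_E$. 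Thus $\tau$ is a genuine real structure.

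Next comes the key computation. Writing $\nabla_{\hbar,R}=d+A_z\,dz+A_{\bar z}\,d\bar z$ from \eqref{R family} and expanding $\nabla_{\hbar,R}\circ\tau=\tau_{T^*M}\circ\nabla_{\hbar,R}$ into its $dz$- and $d\bar z$-components yields the two matrix identities
$$\DDz M + A_z M = M\,\overline{A_{\bar z}},\qquad \DDDz M + A_{\bar z} M = M\,\overline{A_z}.$$
Substituting $M$ and the explicit entries of $A_z,A_{\bar z}$, the off-diagonal equations hold identically (they amount to $\DDz\log(h_R^{-1}h_R)=0$), while the diagonal equations reduce to matching $\hbar^{-1}\beta$ against $\overline{\hbar}R^2\beta$ (and, when $\alpha\neq0$, $\hbar^{-1}\alpha$ against $\overline{\hbar}R^2\alpha$). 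Since $\beta\neq0$, this is exactly the identity $\hbar\overline{\hbar}R^2=1$, i.e. $|\hbar|^2R^2=1$. Hence under the hypothesis $|\hbar|^2R^2=1$ the connection $\nabla_{\hbar,R}$ preserves $\tau$, so its holonomy lies in $\ssl(2,\RR)$ and that of $\mathcal{P}^{\alpha}_{\beta}(\hbar,R)$ lies in $\psl(2,\RR)$. I expect this matrix check to be the main (though entirely routine) obstacle; notably it does not even invoke the vortex equation, only that $h_R$ is real and positive.

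Finally I would establish hyperbolicity. Solving $\tau(v)=v$ gives the totally real sub-bundle $\mathrm{Fix}(\tau)=\{(s_1,h_R\overline{s_1}): s_1\in\CC\}$, whose fibrewise projectivisation is the circle $\RR\PP^1\subset\cp$ fixed by $z\mapsto\overline{z}$. Intersecting with $L=\{(s_1,0)\}$ forces $h_R\overline{s_1}=0$, i.e. $s_1=0$, so $L\cap\mathrm{Fix}(\tau)$ is the zero section; this holds everywhere on $X$, including at the branch points $\Div(\beta)$, since $h_R$ is a positive metric throughout. Consequently the section $[L]$, equivalently the developing map, avoids $\RR\PP^1$ in each fibre, and as $\widetilde{M}$ is connected its image lies in a single component of $\cp\setminus\RR\PP^1$; conjugating by a suitable element of $\psl(2,\CC)$ taking the invariant circle to the standard $\RR\PP^1$ and that component to the upper half-plane, the image lies in $\HH^2$. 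Therefore $\mathcal{P}^{\alpha}_{\beta}(\hbar,R)$ is a branched hyperbolic structure, which is the assertion.
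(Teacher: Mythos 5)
Your proposal is correct and follows essentially the same route as the paper: the same real structure $\tau(v)=\begin{psmallmatrix}0 & h_R^{-1}\\ h_R & 0\end{psmallmatrix}\conj{v}$, the same computation showing $\nabla_{\hbar,R}$ preserves $\tau$ exactly when $|\hbar|^2R^2=1$ (you split the preservation identity into $dz$ and $d\conj{z}$ components where the paper writes one combined equation, but the content is identical), and the same observation that $L$ meets $\mathrm{Fix}(\tau)$ only in the zero section. The extra remarks on $\tau^2=\mathrm{Id}$, the off-diagonal identities, and the connectedness argument placing the developing image in one component of $\cp\setminus\RR\PP^1$ merely make explicit what the paper leaves implicit.
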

	\begin{proof}
		The real structure $\tau$ defining the $\slr$-structure in $E=L\oplus L^{-1}$ is given in a holomorphic frame by $\tau(v)=C\conj{v}$ with $C=\begin{psmallmatrix}
			0 & h_R^{-1}\\
			h_R & 0 
		\end{psmallmatrix}$. It is preserved by $\nabla_{\hbar, R}=d+B$ if and only if $\nabla_{\hbar,R} \circ \tau = \tau_{T^*M} \circ \nabla_{\hbar,R}$ which in this frame reads $dC + BC = C\conj{B}$. Using expression (\ref{R family}), one concludes that this happens if and only if
		\begin{align}
			\hbar^{-1}\alpha h_R+ \hbar R^2 \conj{\beta} h_R  &= \conj{\hbar^{-1} \beta h_R^{-1}+ \hbar R^2 \conj{\alpha}h_R}\notag
			\\
			\Longleftrightarrow (\hbar^{-1}-\conj{\hbar} R^2) \alpha h_R &= (\hbar^{-1}-\conj{\hbar} R^2) \conj{\beta} h_R. \label{preservation condition}
		\end{align}
		In particular, if $|\hbar|^2R^2=1$ the equality is valid. Thus the holonomy of $\nabla_{\hbar, R}$ is real.
		To check that $\mathcal{P}^{\alpha}_{\beta}(\hbar, R)$ is branched hyperbolic we simply note that $L$ avoids the fixed locus of $\tau$. This happens since vectors fixed by the real structure, i.e., such that $\tau(v)=v$, are of the form $ \begin{psmallmatrix}
			v_1\\
			h v_1
		\end{psmallmatrix}$  and the vectors in $L$ are multiples of $ \begin{psmallmatrix}
			1\\
			0
		\end{psmallmatrix}.$
	\end{proof}

	\begin{remark}
		In the particular case $\hbar=1$ and $R=1$, this result recovers the branched hyperbolic structures constructed \cite{biswas:2021aa}. 
	\end{remark}

	\subsection{Deformations of geometric structures} 
	\label{section deformation of Hitchin components}
	
	Using the previous results one can interpret our construction in terms of geometric structures on $X$ as follows. Start by fixing $\hbar$ with $|\hbar|=1$.
	Then the branched projective structure $\mathcal{P}^{\alpha}_\beta(1,R)$ interpolates between a branched hyperbolic structure, at $R=1$, and a partial oper, i.e., a complex projective compatible with $X$, at $R=0$. In the specific case of $\hbar=1$, the branched hyperbolic structure is exactly the one coming from the non-abelian Hodge correspondence.

	In particular, take any Higgs bundle in a Hitchin component, i.e., one of the form 
	$$
	\left(E=K^{1/2}\oplus K^{-1/2}, \Phi=\begin{pmatrix}
		0 & q \\
		1 & 0
	\end{pmatrix}\right),
	$$ 
	where $q\in \holosecs{K^{2}}$ is a quadratic differential. In the previous notation $\alpha=q$ and $\beta=1$, and the condition $\Div(\alpha)\geq\Div(\beta)$ is satisfied, so our construction goes through.
	The non-abelian Hodge correspondence produces a connection which is the holonomy of the (unbranched) Fuchsian hyperbolic structure $\mathcal{P}^q_1(1,R=1)$.
	Then, for $\hbar=1$, and by varying $R$, we obtain the curve $\mathcal{P}^q_1(1,R)$ of complex projective structures. This curve interpolates between this Fuchsian structure, at $R=1$, and the structure of oper determined by $q$, at $R=0$, i.e., the complex projective structure compatible with $X$ and which is classically obtained from the solutions of the Schwarz equation with quadratic differential precisely equal to $q$.
	
	\subsection{Further questions} \label{section further questions}
	
	The fact that the connections $\nabla_{\hbar,R}$ in the family (\ref{R family}) are associated with branched projective structures comes with the restriction $\Div(\alpha)\geq \Div(\beta)$. 
	
	The description of the set of Higgs bundles $\Phi$ for which this holds is determined in general by Brill--Noether type considerations. Only if $(L \oplus L^{-1}, \Phi)$ lies in a Hitchin component the construction works for all $\Phi$ and the structures so obtained are unbranched (cf.\ Example~\ref{example hitchin} and also Remark~\ref{example hitchin component}). 
	Note that this restriction cannot be lifted, since then the differential $\mu=\hbar^2 R^2 \frac{\conj{\alpha}}{\beta} h_R^2$ will have points where $|\mu|=1$, and it will no longer be a Beltrami differential. Another way to see this problem is precisely that the smooth second fundamental form of $L$, given by $\hbar^{-1} \beta + \hbar R^2 \conj{\alpha}h_R^2$, will have zeros at other points, besides the zeros of $\alpha$ and $\beta$, namely over the real subvariety of $M$ of points where $\beta(z)=\hbar^2 R^2 \conj{\alpha}(z)h_R^2(z)$. 
	One might ask what kind of structures exist that relate to the representations in this case, for which the restriction does not hold. Equivalently, what is the correct class of structures which are uniformized by $\slr$-Higgs bundles in general? Is the family appearing in the conformal limit kept within that class? This seems a plausible framework since the conformal limit exists regardless of the restrictions on $\alpha$ and $\beta$.
	
	It is well known that the fiber of the holonomy map $\mathscr{P}(M)\to \mathcal{M}^{G}_{\mathrm{B}}$ is infinite and discrete (see, e.g., Dumas~\cite{dumas}). It would thus be interesting to study the ambiguity in the projective structures $\mathcal{P}^{\alpha}_\beta(\hbar,R)$ mapping to the same holonomy when varying $\alpha$, $\beta$, $\hbar$ and $R$. 
	
	\singlespacing

	\newcommand{\etalchar}[1]{$^{#1}$}

\end{document}